\documentclass[onefignum,onetabnum]{siamart171218}



\usepackage{lipsum}
\usepackage{amsfonts}
\usepackage{graphicx}
\usepackage{epstopdf}
\usepackage{algorithmic}
\ifpdf
  \DeclareGraphicsExtensions{.eps,.pdf,.png,.jpg}
\else
  \DeclareGraphicsExtensions{.eps}
\fi

\usepackage{todonotes}
\definecolor{notefarbe}{rgb}{0.9, 0.9,  0.982}

\usepackage{inputenc}
\inputencoding{utf8}
\makeatother

\usepackage{pgf}
\usepackage{pgfplots}
\usetikzlibrary{matrix}
\usepgfplotslibrary{groupplots}
\pgfplotsset{compat=newest}

\usepackage{graphicx}
\usepackage{amssymb}

\usepackage{ragged2e}
\usepackage{tabularx}
\usepackage{booktabs}
\usepackage{caption}

\usepackage{enumitem}
\usepackage{xspace}
\usepackage{mathtools}

\DeclarePairedDelimiterXPP\seq[1]{}{(}{)}{}{#1}

\DeclareMathOperator{\K}{\mathcal{K}}

\DeclareMathOperator{\C}{\mathcal{C}}

\newcommand{\ep}{\varepsilon}
\newcommand{\dd}{\ensuremath{\, \mathrm{d}}}

\newcommand{\etal}{\emph{et al.}}

\newcommand{\R}{\mathbb{R}}
\newcommand{\cX}{{\cal X}}

\newcommand{\lb}{\left(}
\newcommand{\rb}{\right)}

\newcolumntype{b}{X}
\newcolumntype{s}{>{\hsize=.5\hsize}X}
\newcolumntype{Y}{>{\RaggedRight\arraybackslash}X}
\setlength\extrarowheight{5pt}


\newsiamremark{remark}{Remark}
\newsiamremark{hypothesis}{Hypothesis}
\crefname{hypothesis}{Hypothesis}{Hypotheses}
\newsiamthm{claim}{Claim}
\newsiamremark{example}{Example}
\newsiamremark{exampleSummary}{Example Summary}


\headers{Nonlocal Adhesion Models on Bounded Domains}{T. Hillen, A. Buttensch\"{o}n}

\title{Nonlocal Adhesion Models for Microorganisms on Bounded Domains
    \thanks{Submitted to the editors DATE.}}

\author{Thomas Hillen\thanks{Department of Mathematical and Statistical
Sciences, Centre for Mathematical Biology, University of Alberta, Edmonton
T6G~2G1, AB, Canada
  (\email{thillen@ualbert.ca}, \url{http://ualberta.ca/\string~thillen/}).}
\and Andreas Buttensch\"{o}n \thanks{Department of Mathematics, University of
British Columbia, Vancouver V6T~1Z2, BC, Canada (\email{abuttens@math.ubc.ca}, \url{www.buttenschoen.ca}).}}

\usepackage{amsopn}


\ifpdf
\hypersetup{%
  pdftitle={Nonlocal Adhesion Models for Microorganisms on Bounded Domains},
  pdfauthor={T. Hillen, A. Buttensch\"{o}n}
}
\fi


\externaldocument{ex_supplement}


\begin{document}

\maketitle

\begin{abstract}
    In 2006 Armstrong, Painter and Sherratt formulated a non-local differential
    equation model for cell-cell adhesion. For the one dimensional case we
    derive various types of adhesive, repulsive, and no-flux boundary
    conditions. We prove local and global existence and uniqueness for the
    resulting integro-differential equations. In numerical simulations we
    consider adhesive, repulsive and neutral boundary conditions and we show
    that the solutions mimic known behavior of fluid adhesion to boundaries. In
    addition, we observe interior pattern formation due to cell-cell adhesion.
\end{abstract}

\begin{keywords}
    cell-cell adhesion, non-local models, no-flux boundary conditions,
    global existence, semigroups
\end{keywords}

\begin{AMS}
    92C17, 35Q92, 35K99
\end{AMS}

\section{Introduction}\label{S:Intro}

Adhesion between cells and other tissue components are fundamental in tissue
development (embryogenesis), and homeostasis and repair of tissues. Cellular
adhesion allow cells to self-organize by exerting forces on each other. A
single adhesive cell population, for instance, will aggregate to form sheets or aggregates,
while two cell populations ``sort'' into one of four cell-sorting
configurations, first
described by Steinberg \cite{Steinberg62a}. The regulation of cellular adhesion is critical in
both development and pathological conditions such as cancer. For
many cancers the loss of cell-cell cohesion is a pre-requiste for
cell invasion and subsequent metastasis formation. Due to their biological
importance, it is highly desirable to have accurate models of cellular adhesion
as part of standard modelling frameworks. Here we consider models of
the reaction-diffusion-taxis form, which are popular in the modelling of
biological tissues.

In 2006, Armstrong \etal\ \cite{Armstrong2006} proposed the first successful
continuum model of cellular adhesion. The novelty of this model is the use
of a non-local integral term to describe the adhesive forces between cells.
To introduce the model, we let $u(x, t)$ denote a cell density at spatial
location $x$ and time $t$, then on the real line the model is given by
the following non-local partial differential equations.
\begin{equation}\label{Eqn:ArmstrongModelIntro}
    u_t(x, t) = D u_{xx}(x, t) - \alpha \lb u(x, t)
                \int_{-R}^{R} H(u(x + r, t)) \Omega(r) \dd r \rb_{x},
\end{equation}
where $D$ is the diffusion coefficient, $\alpha$ the strength of homotypic cell
adhesion, $H(u)$ is a possibly nonlinear function describing the nature of the
adhesion force, $\Omega(r)$ is an odd function giving the
adhesion force's direction and $R$ the sensing radius of the cell.
The model~\eqref{Eqn:ArmstrongModelIntro} was derived from an underlying
stochastic random walk in \cite{Buttenschon2017}.

The novelty of model~\eqref{Eqn:ArmstrongModelIntro} is the integral term
modelling cell-cell adhesion. Intuitively, the integral term can be interpreted
as a tug-of-war, or a force balance causing cells to move in the direction
of largest adhesion force. Since other cells are required for the creation
of adhesive forces, it is easy to see that the non-local term causes cells
to aggregate. Furthermore the two cell population version of
model~\eqref{Eqn:ArmstrongModelIntro} is the first continuum model to replicated
the different cell-sorting experiments from Steinberg's
classical experiments~\cite{Armstrong2006}.

In biological systems, cellular adhesion features prominently in organism
development, wound-healing, and cancer invasion (metastasis). Therefore, it is
unsurprising that model~\eqref{Eqn:ArmstrongModelIntro} has been extensively
used to model cancer cell invasion
\cite{Gerisch2008,Sherratt2008,Gerisch2010,Painter2010,Chaplain2011,Andasari2012a,Domschke2014,Bitsouni2017a},
and developmental processes \cite{Armstrong2009,Painter2015}.
To allow for
numerical exploration, Gerisch et al.  \cite{Gerisch2010a} developed an efficient numerical
method for the integral term in~\eqref{Eqn:ArmstrongModelIntro}.
Finally, with the availability of controlled biological experiments
\cite{Murakawa2015,carrillo2019population} extended the adhesion
model~\eqref{Armstrong1} with density-dependent diffusion, and volume filling
to improve the model's fit to experimental data.

Existence results for the solutions of the non-local
equation~\eqref{Eqn:ArmstrongModelIntro} were developed in
\cite{Sherratt2008,Andasari2012a,Hillen2017}. Most significant is the general
work by Hillen~\etal \cite{Hillen2017}, who showed local and global existence of classical
solutions in unbounded domains. Finally, for small values of adhesion strength $\alpha$,
travelling wave solutions of the non-local adhesion model have been
described in \cite{Chunhua2013}.

All of the above mentioned models and results considered models on unbounded
or periodic domains, since this avoids defining the non-local adhesion
operator near boundaries. In this paper, we extend
model~\eqref{Eqn:ArmstrongModelIntro} to a bounded domain.
Our work is motivated by observations that adhesive or repulsive
cell-boundary interactions are significant during development. For instance,
repulsive membranes are required for correct organ placement in
zebrafish~\cite{Paksa2016}. In this work, we formulate different biological
boundary conditions for model~\eqref{Eqn:ArmstrongModelIntro}, describing adhesive, repulsive, or neutral boundary interactions. Due to the
non-locality we find that it is not sufficient to describe the non-local
operators behaviour on just the boundary, but its behaviour must be provided
in a boundary region.

Another class of non-local models for species aggregations are the so called {\it aggregation equations} \cite{fetecau2017swarm,wu2015nonlocal}. Here the non-local term arises through an interaction potential between different individuals. This interaction potential can describe long range attraction, short range repulsion and intermediate range alignment of species. There is an extensive mathematical theory related to the aggregation equations, and most of the results rely on the fact that the aggregation equations arise as gradient flow of a potential. Our adhesion model (\ref{Eqn:ArmstrongModelIntro})  does not have such a variational structure. The aggregation equations on a bounded domain have recently been studied in  \cite{fetecau2017swarm,wu2015nonlocal}. The boundary conditions are very similar to our adhesive and repulsive boundary conditions.

\subsection{Outline}
Starting from model~\eqref{Eqn:ArmstrongModelIntro} in divergence form,
in \cref{S:1} we formulate several biologically relevant boundary conditions. In particular,
we consider two cases (1) the adhesive flux is independent from the diffusive
flux and (2) the diffusive and adhesive flux depend on each other. In the case,
of independent fluxes, using semi-group theory, we develop a
local existence theory (\cref{S:existence}) and global existence
(\cref{sec:global_existence}) for the non-local adhesion model with
no-flux boundary conditions. In \cref{sec:numerics} we compare
numerical solutions of the adhesion model with different no-flux boundary
conditions to the periodic situation. We observe
boundary adhesion effects, similar to those known from thin film wetting of
glass boundaries. The case where the adhesive and diffusive fluxes are coupled
leads to non-trivial Robin-type boundary conditions. An existence theory for
those cases is much more involved and left for future research. In
\cref{sec:conclusion} we provide
some concluding remarks, and outlooks for future work.

\section{Boundary conditions for non-local operators}\label{S:1}

We consider the one-dimensional Armstrong adhesion model on the interval
$[0,L]$ with sensing radius $0<R<\frac{L}{2}$.
\begin{equation}\label{Armstrong1}
    u_t(x,t) = D u_{xx}(x,t) -\alpha\lb u(x,t) \int_{E(x)} H(u(x+r,t))\Omega(r) \dd r \rb_x ,
\end{equation}
and we define the non-local integral operator as
\begin{equation}\label{eq:K}
    \K[u](x,t) = \int_{E(x)} H(u(x+r,t) \Omega(r) \dd r.
\end{equation}
The domain of integration $E(x)\subset [-R,R]$ is chosen to ensure that the
integrand does not reach outside of the domain $[0,L]$, and it is called the
{\it sampling domain}. The sampling domain is not unique and we give several
examples in \cref{subsec:SensingExamples}.

To address the boundary conditions we consider the particle flux
\begin{equation}\label{flux}
    J(x,t) = D u_x(x,t) - \alpha u(x,t) \K[u](x,t).
\end{equation}
Our first goal is to formulate no-flux boundary conditions i.e. $J(x,t) = 0$
for $x = 0, L$. We consider two different cases; (1) the diffusive flux and the
adhesive flux are independent on the boundary (2) the diffusive and adhesive
flux depend on each other.

\paragraph{Independent case}
If we stipulate that the diffusive and adhesive component of the flux are
independently zero on the boundary, then the following are a suitable set of
boundary conditions for~\eqref{Armstrong1}.
\begin{eqnarray}
    && u_x(0,t) = u_x(L,t) =0,\label{nofluxdiffusion}\\
    && \K[u](0) = \K[u](L) = 0. \label{nofluxK}
\end{eqnarray}
The first condition \eqref{nofluxdiffusion} is a condition on the solution
$u(x,t)$ and we can include this into the right choice of function space. The
second condition \eqref{nofluxK}, however, should be seen as a condition on the
non-local operator $\K$ i.e.\ condition~\eqref{nofluxK} must hold for any $u$.
In other words, in this no-flux situation we only
consider non-local operators $\K$ that satisfy \eqref{nofluxK}. We give explicit
examples later.

\paragraph{Dependent case}
If we want to describe adhesion to or repulsion from the
boundaries, we can relax the above conditions on $\K$. For example if we assume
\begin{equation}\label{adhesivebc}
    \K[u](0)<0, \qquad \K[u](L)>0,
\end{equation}
then we have net flow towards the boundaries, i.e.\ an adhesive boundary, while
\begin{equation}\label{repulsivebc}
    \K[u](0)>0, \qquad \K[u](L)<0,
\end{equation}
denote repulsive boundary conditions. However, to ensure that the total particle
flux $J(x,t)$ is zero on the boundary we require that the diffusive flux component
counter act the adhesive component, that is
\begin{equation}\label{Robin}
    D u_x(x, t) = \alpha u(x,t) \K[u](x,t),\qquad\mbox{for } x = 0, L.
\end{equation}
This results in a non-local boundary condition of Robin type.

\subsection{Examples}\label{subsec:SensingExamples}
We consider several examples of sensing domains $E(x)$ for use in the non-local operator
$\K[u]$ defined in equation~\eqref{eq:K}. The examples are summarized in Table~\ref{tab:examples}.

\begin{table}
\small
\setlength{\extrarowheight}{5pt}
\begin{tabular}{|l|c|c|c|}
\hline
 \bf Case & $\K[u]$ & $f_1(x)$ & $ f_2(x)$ \\
 \hline
 \hline
periodic & $\int_{E(x)} H(u) \Omega(r) \dd r $ & $f_1 = -R $ & $f_2 =R$\\
\hline
naive & $ \int_{E(x)} H(u) \Omega(r) \dd r $ &
$f_1 = \begin{cases} -x, & I_1\\ -R, &  I_2\end{cases} $ &
$f_2 = \begin{cases} R, &   I_3\\ L-x, &I_4\end{cases}$ \\ \hline
non-flux & $ \int_{E(x)} H(u) \Omega(r) \dd r $ &
    $f_1 = \begin{cases} R-2x, & I_1 \\ -R, & I_2 \end{cases} $ &
    $f_2 = \begin{cases} R, & I_3\\ 2L-R-2x, & I_4\end{cases}$\\
\hline
weakly & $\int_{E(x)} H(u) \Omega(r) \dd r + a_0 + a_L$ & $f_1=$naive & $f_2 =$ naive\\
adhesive & $a_0 = \beta^0 \int_{-R}^{-x} \Omega(r) \dd r$ & & \\
& $ a_L = \beta^L\int_{L-x}^R \Omega(r) \dd r$ & & \\
\hline
\end{tabular}

\caption{The different cases of suitable boundary conditions on $[0,L]$. The sensing slice is defined as $E(x) = \{ r\in[-R,R]: f_1(x) \leq r \leq f_2(x) \}$. The abbreviations $I_1, I_2, I_3, I_4$ stand for
$x\in[ 0,R], x\in(R,L], x\in[0,L-R], x\in (L-R,L]$, respectively.}\label{tab:examples}
\end{table}

\begin{example}[periodic]\label{Example:Periodic}
    The periodic case is special, since $x=0$ and $x=L$ are identified. Any integral
    over a domain of length $2R$ is well defined. This case is
    included in our framework with the choice of sampling domain of
\[
    E_1(x) = [-R, R].
\]
and periodic boundary conditions.
\end{example}

\begin{example}[naive]\label{Example:Naive}
    The first idea of a well defined integral
    operator~\eqref{eq:K} on $[0,L]$ is to remove those parts of the integration
    that leave the domain. This can be achieved through the sampling slice
    \[
        E_2(x) = \{ r\in[-R,R], f_1(x)\leq r \leq  f_2(x) \}
    \]
    with
    \[
        f_1(x) = \left\{\begin{array}{ll} -x, &  x\in[0,R]\\ -R, &  x\in(R,L] \end{array}\right.,
        \quad
        f_2(x) = \left\{\begin{array}{ll} R, &  x\in[0,L-R) \\ L-x, & x\in[L-R,L]\end{array}\right..
    \]
    At the left boundary we obtain
    \[
        \K[u](0) = \int_0^R H(u(x+r,t)) \Omega(r) \dd r \geq 0,
    \]
    which is non-negative for positive $H$ and $\Omega$. Similarly we find
    $\K[u](L)\leq 0$. In this situation cells at the boundary are attracted by cells in the interior
    with no interaction with the wall. Hence a net flow away from the boundary is created. By equation~\eqref{repulsivebc}, we classify these naive boundary conditions
    as repulsive. Further, this implies that $E_2(x)$ are not suitable to ensure that
    $\K[u]$ are zero on the boundary for all $u$. For a pictorial representation of the sampling
    domain $E_2(x)$ see \cref{Fig:DomainOfIntegration}.
\end{example}

\begin{example}[no-flux]\label{Example:NoFlux}
    In this example we choose the sampling domain such that the sampling domain $E(x)$ is a
    set of measure zero for $x = 0, L$, thus ensuring that $\K[u]$ is zero on the boundary.
    We let
    \[
        E_3(x) = \{ r\in[-R,R], f_1(x)\leq r \leq  f_2(x) \},
    \]
    where now
    \[
        f_1(x) = \left\{\begin{array}{ll} R-2x, &  x\in[0,R]\\ -R, &  x\in(R,L] \end{array}\right.,
        \quad
        f_2(x) = \left\{\begin{array}{ll} R, &  x\in[0,L-R) \\ 2L-R-2x, &
        x\in[L-R,L]\end{array}\right..
    \]
    In this case we obtain on the left boundary
    \[
        \K[u](0) = \int_R^R H(u(x+r,t)) \Omega(r) \dd r = 0,
    \]
    and $\K[u](L) =0$, and hence condition~\eqref{nofluxK} is satisfied. This makes
    $E_3(x)$ a suitable sampling domain for the independent no-flux boundary conditions.
    In this situation cell protrusions which hit the boundary fold back onto the cell itself,
    thus neutralizing the cell's adhesion molecules (see \cref{fig:bioboundary1}~(A)).
\end{example}

\begin{figure}[ht]\centering
    \includegraphics[width=10cm]{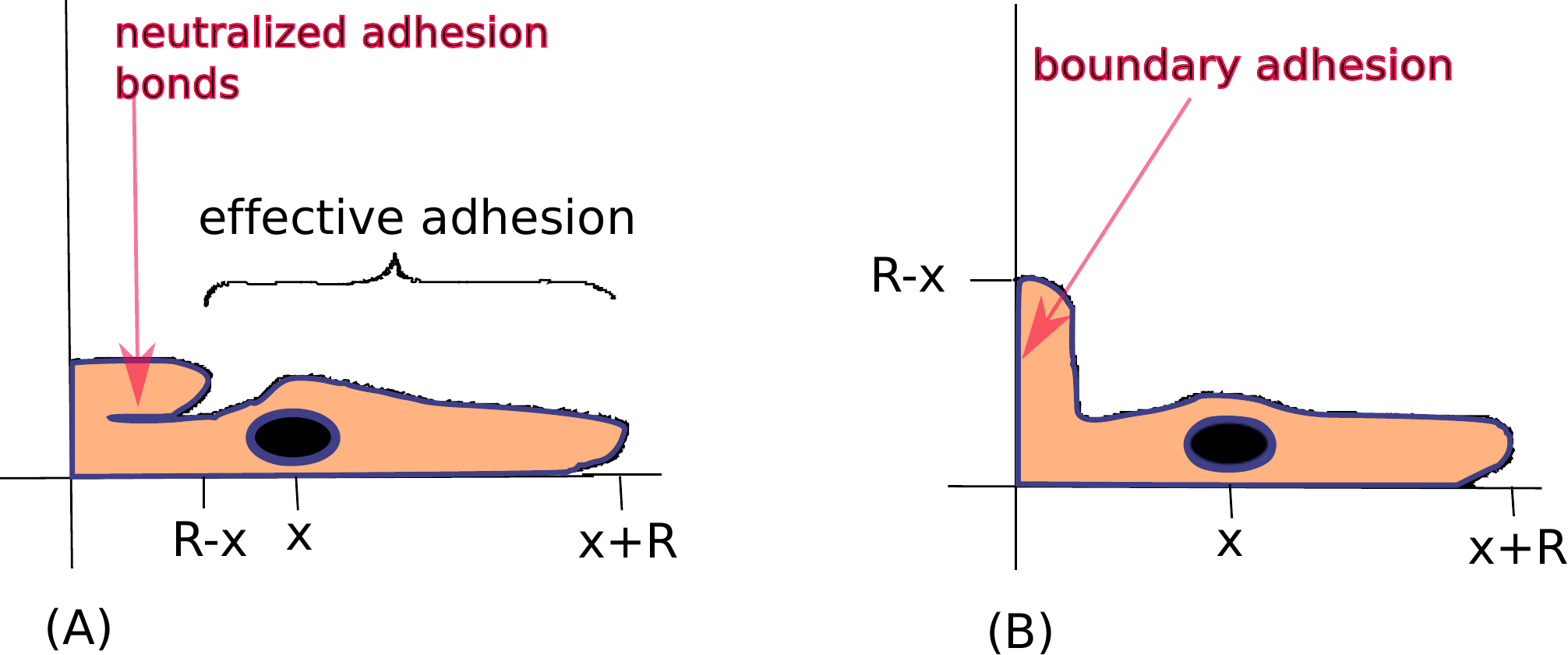}
    \caption{(A): The filopodia of cell are reflected or stopped at the boundary.
    As a result the cell starts to form adhesion bonds with itself, which are then
    not contributing to the net adhesion force. Note that only one cell is shown in
    this sketch.  (B) The weak adhesive case. Cells make contact to the boundary in
    a well balanced way, such that the net flux is still zero.\label{fig:bioboundary1}}
\end{figure}

Inspired by the previous examples we define a whole class of suitable sampling domain $E(x)$ as follows.
\begin{definition}[Sampling domain]\label{def:slice}
\begin{enumerate}
\item Two continuous functions $f_{1,2}:D\to \R$ define a suitable sampling domain $E(x)$ if they satisfy
\begin{enumerate}
\item $-R\leq f_1(x) \leq f_2(x)\leq R $ for all $x\in[0,L]$.
\item $f_1(x) = -R$ for $x\in[R, L]$
\item $f_2(x) = R$ for $x\in[0,L-R]$.
\item $f_1(x)$ and $f_2(x)$ are non-increasing and have uniformly bounded one-sided derivatives.
\end{enumerate}
\item A suitable sampling domain $E(x)$ satisfies condition~\eqref{nofluxK} if in addition
\begin{enumerate}[resume]
\item $f_1(0) = R \quad \mbox{and} \quad  f_2(L) = -R$. 
\end{enumerate}
\end{enumerate}
\end{definition}
It is straight forward to check that all our sampling domain are suitable. However, only
$E_3(x)$ satisfies condition~\eqref{nofluxK}. In Figures~\ref{Fig:DomainOfIntegration} and~\ref{Fig:NoFluxDomainOfIntegration} we show two examples of sampling domains over the whole
domain $[0, L]$.

\begin{minipage}[t]{0.45\textwidth}\centering
    \resizebox{0.95\textwidth}{!}{%
\begin{tikzpicture}

\begin{axis}[
title={The Sampling Domain},
xlabel={Space},
ylabel={Heading},
xmin=-0.1, xmax=3.1,
ymin=-1.3, ymax=1.3,
width=12.6cm,
height=8cm,
xtick={0,1.5,3},
xticklabels={$0$,$L/2$,$L$},
ytick={-1,0,1},
yticklabels={$-R$,$0$,$R$},
tick align=outside,
tick pos=left,
x grid style={lightgray!92.026143790849673!black},
y grid style={lightgray!92.026143790849673!black}
]
\path [draw=black, draw opacity=0.75, line width=0.83630769230769231pt, dash pattern=on 2pt off 3pt] (axis cs:0,-1.3)
--(axis cs:0,1.3);

\path [draw=black, draw opacity=0.75, line width=0.83630769230769231pt, dash pattern=on 2pt off 3pt] (axis cs:3,-1.3)
--(axis cs:3,1.3);

\path [draw=black, draw opacity=0.75, line width=0.83630769230769231pt, dash pattern=on 2pt off 3pt] (axis cs:-0.5,-1)
--(axis cs:3.5,-1);

\path [draw=black, draw opacity=0.75, line width=0.83630769230769231pt, dash pattern=on 2pt off 3pt] (axis cs:-0.5,0)
--(axis cs:3.5,0);

\path [draw=black, draw opacity=0.75, line width=0.83630769230769231pt, dash pattern=on 2pt off 3pt] (axis cs:-0.5,1)
--(axis cs:3.5,1);

\path [draw=lightgray, fill=lightgray] (axis cs:0,1)
--(axis cs:0,0)
--(axis cs:0.015625,-0.015625)
--(axis cs:0.03125,-0.03125)
--(axis cs:0.046875,-0.046875)
--(axis cs:0.0625,-0.0625)
--(axis cs:0.078125,-0.078125)
--(axis cs:0.09375,-0.09375)
--(axis cs:0.109375,-0.109375)
--(axis cs:0.125,-0.125)
--(axis cs:0.140625,-0.140625)
--(axis cs:0.15625,-0.15625)
--(axis cs:0.171875,-0.171875)
--(axis cs:0.1875,-0.1875)
--(axis cs:0.203125,-0.203125)
--(axis cs:0.21875,-0.21875)
--(axis cs:0.234375,-0.234375)
--(axis cs:0.25,-0.25)
--(axis cs:0.265625,-0.265625)
--(axis cs:0.28125,-0.28125)
--(axis cs:0.296875,-0.296875)
--(axis cs:0.3125,-0.3125)
--(axis cs:0.328125,-0.328125)
--(axis cs:0.34375,-0.34375)
--(axis cs:0.359375,-0.359375)
--(axis cs:0.375,-0.375)
--(axis cs:0.390625,-0.390625)
--(axis cs:0.40625,-0.40625)
--(axis cs:0.421875,-0.421875)
--(axis cs:0.4375,-0.4375)
--(axis cs:0.453125,-0.453125)
--(axis cs:0.46875,-0.46875)
--(axis cs:0.484375,-0.484375)
--(axis cs:0.5,-0.5)
--(axis cs:0.515625,-0.515625)
--(axis cs:0.53125,-0.53125)
--(axis cs:0.546875,-0.546875)
--(axis cs:0.5625,-0.5625)
--(axis cs:0.578125,-0.578125)
--(axis cs:0.59375,-0.59375)
--(axis cs:0.609375,-0.609375)
--(axis cs:0.625,-0.625)
--(axis cs:0.640625,-0.640625)
--(axis cs:0.65625,-0.65625)
--(axis cs:0.671875,-0.671875)
--(axis cs:0.6875,-0.6875)
--(axis cs:0.703125,-0.703125)
--(axis cs:0.71875,-0.71875)
--(axis cs:0.734375,-0.734375)
--(axis cs:0.75,-0.75)
--(axis cs:0.765625,-0.765625)
--(axis cs:0.78125,-0.78125)
--(axis cs:0.796875,-0.796875)
--(axis cs:0.8125,-0.8125)
--(axis cs:0.828125,-0.828125)
--(axis cs:0.84375,-0.84375)
--(axis cs:0.859375,-0.859375)
--(axis cs:0.875,-0.875)
--(axis cs:0.890625,-0.890625)
--(axis cs:0.90625,-0.90625)
--(axis cs:0.921875,-0.921875)
--(axis cs:0.9375,-0.9375)
--(axis cs:0.953125,-0.953125)
--(axis cs:0.96875,-0.96875)
--(axis cs:0.984375,-0.984375)
--(axis cs:1,-1)
--(axis cs:1.015625,-1)
--(axis cs:1.03125,-1)
--(axis cs:1.046875,-1)
--(axis cs:1.0625,-1)
--(axis cs:1.078125,-1)
--(axis cs:1.09375,-1)
--(axis cs:1.109375,-1)
--(axis cs:1.125,-1)
--(axis cs:1.140625,-1)
--(axis cs:1.15625,-1)
--(axis cs:1.171875,-1)
--(axis cs:1.1875,-1)
--(axis cs:1.203125,-1)
--(axis cs:1.21875,-1)
--(axis cs:1.234375,-1)
--(axis cs:1.25,-1)
--(axis cs:1.265625,-1)
--(axis cs:1.28125,-1)
--(axis cs:1.296875,-1)
--(axis cs:1.3125,-1)
--(axis cs:1.328125,-1)
--(axis cs:1.34375,-1)
--(axis cs:1.359375,-1)
--(axis cs:1.375,-1)
--(axis cs:1.390625,-1)
--(axis cs:1.40625,-1)
--(axis cs:1.421875,-1)
--(axis cs:1.4375,-1)
--(axis cs:1.453125,-1)
--(axis cs:1.46875,-1)
--(axis cs:1.484375,-1)
--(axis cs:1.5,-1)
--(axis cs:1.515625,-1)
--(axis cs:1.53125,-1)
--(axis cs:1.546875,-1)
--(axis cs:1.5625,-1)
--(axis cs:1.578125,-1)
--(axis cs:1.59375,-1)
--(axis cs:1.609375,-1)
--(axis cs:1.625,-1)
--(axis cs:1.640625,-1)
--(axis cs:1.65625,-1)
--(axis cs:1.671875,-1)
--(axis cs:1.6875,-1)
--(axis cs:1.703125,-1)
--(axis cs:1.71875,-1)
--(axis cs:1.734375,-1)
--(axis cs:1.75,-1)
--(axis cs:1.765625,-1)
--(axis cs:1.78125,-1)
--(axis cs:1.796875,-1)
--(axis cs:1.8125,-1)
--(axis cs:1.828125,-1)
--(axis cs:1.84375,-1)
--(axis cs:1.859375,-1)
--(axis cs:1.875,-1)
--(axis cs:1.890625,-1)
--(axis cs:1.90625,-1)
--(axis cs:1.921875,-1)
--(axis cs:1.9375,-1)
--(axis cs:1.953125,-1)
--(axis cs:1.96875,-1)
--(axis cs:1.984375,-1)
--(axis cs:2,-1)
--(axis cs:2.015625,-1)
--(axis cs:2.03125,-1)
--(axis cs:2.046875,-1)
--(axis cs:2.0625,-1)
--(axis cs:2.078125,-1)
--(axis cs:2.09375,-1)
--(axis cs:2.109375,-1)
--(axis cs:2.125,-1)
--(axis cs:2.140625,-1)
--(axis cs:2.15625,-1)
--(axis cs:2.171875,-1)
--(axis cs:2.1875,-1)
--(axis cs:2.203125,-1)
--(axis cs:2.21875,-1)
--(axis cs:2.234375,-1)
--(axis cs:2.25,-1)
--(axis cs:2.265625,-1)
--(axis cs:2.28125,-1)
--(axis cs:2.296875,-1)
--(axis cs:2.3125,-1)
--(axis cs:2.328125,-1)
--(axis cs:2.34375,-1)
--(axis cs:2.359375,-1)
--(axis cs:2.375,-1)
--(axis cs:2.390625,-1)
--(axis cs:2.40625,-1)
--(axis cs:2.421875,-1)
--(axis cs:2.4375,-1)
--(axis cs:2.453125,-1)
--(axis cs:2.46875,-1)
--(axis cs:2.484375,-1)
--(axis cs:2.5,-1)
--(axis cs:2.515625,-1)
--(axis cs:2.53125,-1)
--(axis cs:2.546875,-1)
--(axis cs:2.5625,-1)
--(axis cs:2.578125,-1)
--(axis cs:2.59375,-1)
--(axis cs:2.609375,-1)
--(axis cs:2.625,-1)
--(axis cs:2.640625,-1)
--(axis cs:2.65625,-1)
--(axis cs:2.671875,-1)
--(axis cs:2.6875,-1)
--(axis cs:2.703125,-1)
--(axis cs:2.71875,-1)
--(axis cs:2.734375,-1)
--(axis cs:2.75,-1)
--(axis cs:2.765625,-1)
--(axis cs:2.78125,-1)
--(axis cs:2.796875,-1)
--(axis cs:2.8125,-1)
--(axis cs:2.828125,-1)
--(axis cs:2.84375,-1)
--(axis cs:2.859375,-1)
--(axis cs:2.875,-1)
--(axis cs:2.890625,-1)
--(axis cs:2.90625,-1)
--(axis cs:2.921875,-1)
--(axis cs:2.9375,-1)
--(axis cs:2.953125,-1)
--(axis cs:2.96875,-1)
--(axis cs:2.984375,-1)
--(axis cs:3,-1)
--(axis cs:3,0)
--(axis cs:3,0)
--(axis cs:2.984375,0.015625)
--(axis cs:2.96875,0.03125)
--(axis cs:2.953125,0.046875)
--(axis cs:2.9375,0.0625)
--(axis cs:2.921875,0.078125)
--(axis cs:2.90625,0.09375)
--(axis cs:2.890625,0.109375)
--(axis cs:2.875,0.125)
--(axis cs:2.859375,0.140625)
--(axis cs:2.84375,0.15625)
--(axis cs:2.828125,0.171875)
--(axis cs:2.8125,0.1875)
--(axis cs:2.796875,0.203125)
--(axis cs:2.78125,0.21875)
--(axis cs:2.765625,0.234375)
--(axis cs:2.75,0.25)
--(axis cs:2.734375,0.265625)
--(axis cs:2.71875,0.28125)
--(axis cs:2.703125,0.296875)
--(axis cs:2.6875,0.3125)
--(axis cs:2.671875,0.328125)
--(axis cs:2.65625,0.34375)
--(axis cs:2.640625,0.359375)
--(axis cs:2.625,0.375)
--(axis cs:2.609375,0.390625)
--(axis cs:2.59375,0.40625)
--(axis cs:2.578125,0.421875)
--(axis cs:2.5625,0.4375)
--(axis cs:2.546875,0.453125)
--(axis cs:2.53125,0.46875)
--(axis cs:2.515625,0.484375)
--(axis cs:2.5,0.5)
--(axis cs:2.484375,0.515625)
--(axis cs:2.46875,0.53125)
--(axis cs:2.453125,0.546875)
--(axis cs:2.4375,0.5625)
--(axis cs:2.421875,0.578125)
--(axis cs:2.40625,0.59375)
--(axis cs:2.390625,0.609375)
--(axis cs:2.375,0.625)
--(axis cs:2.359375,0.640625)
--(axis cs:2.34375,0.65625)
--(axis cs:2.328125,0.671875)
--(axis cs:2.3125,0.6875)
--(axis cs:2.296875,0.703125)
--(axis cs:2.28125,0.71875)
--(axis cs:2.265625,0.734375)
--(axis cs:2.25,0.75)
--(axis cs:2.234375,0.765625)
--(axis cs:2.21875,0.78125)
--(axis cs:2.203125,0.796875)
--(axis cs:2.1875,0.8125)
--(axis cs:2.171875,0.828125)
--(axis cs:2.15625,0.84375)
--(axis cs:2.140625,0.859375)
--(axis cs:2.125,0.875)
--(axis cs:2.109375,0.890625)
--(axis cs:2.09375,0.90625)
--(axis cs:2.078125,0.921875)
--(axis cs:2.0625,0.9375)
--(axis cs:2.046875,0.953125)
--(axis cs:2.03125,0.96875)
--(axis cs:2.015625,0.984375)
--(axis cs:2,1)
--(axis cs:1.984375,1)
--(axis cs:1.96875,1)
--(axis cs:1.953125,1)
--(axis cs:1.9375,1)
--(axis cs:1.921875,1)
--(axis cs:1.90625,1)
--(axis cs:1.890625,1)
--(axis cs:1.875,1)
--(axis cs:1.859375,1)
--(axis cs:1.84375,1)
--(axis cs:1.828125,1)
--(axis cs:1.8125,1)
--(axis cs:1.796875,1)
--(axis cs:1.78125,1)
--(axis cs:1.765625,1)
--(axis cs:1.75,1)
--(axis cs:1.734375,1)
--(axis cs:1.71875,1)
--(axis cs:1.703125,1)
--(axis cs:1.6875,1)
--(axis cs:1.671875,1)
--(axis cs:1.65625,1)
--(axis cs:1.640625,1)
--(axis cs:1.625,1)
--(axis cs:1.609375,1)
--(axis cs:1.59375,1)
--(axis cs:1.578125,1)
--(axis cs:1.5625,1)
--(axis cs:1.546875,1)
--(axis cs:1.53125,1)
--(axis cs:1.515625,1)
--(axis cs:1.5,1)
--(axis cs:1.484375,1)
--(axis cs:1.46875,1)
--(axis cs:1.453125,1)
--(axis cs:1.4375,1)
--(axis cs:1.421875,1)
--(axis cs:1.40625,1)
--(axis cs:1.390625,1)
--(axis cs:1.375,1)
--(axis cs:1.359375,1)
--(axis cs:1.34375,1)
--(axis cs:1.328125,1)
--(axis cs:1.3125,1)
--(axis cs:1.296875,1)
--(axis cs:1.28125,1)
--(axis cs:1.265625,1)
--(axis cs:1.25,1)
--(axis cs:1.234375,1)
--(axis cs:1.21875,1)
--(axis cs:1.203125,1)
--(axis cs:1.1875,1)
--(axis cs:1.171875,1)
--(axis cs:1.15625,1)
--(axis cs:1.140625,1)
--(axis cs:1.125,1)
--(axis cs:1.109375,1)
--(axis cs:1.09375,1)
--(axis cs:1.078125,1)
--(axis cs:1.0625,1)
--(axis cs:1.046875,1)
--(axis cs:1.03125,1)
--(axis cs:1.015625,1)
--(axis cs:1,1)
--(axis cs:0.984375,1)
--(axis cs:0.96875,1)
--(axis cs:0.953125,1)
--(axis cs:0.9375,1)
--(axis cs:0.921875,1)
--(axis cs:0.90625,1)
--(axis cs:0.890625,1)
--(axis cs:0.875,1)
--(axis cs:0.859375,1)
--(axis cs:0.84375,1)
--(axis cs:0.828125,1)
--(axis cs:0.8125,1)
--(axis cs:0.796875,1)
--(axis cs:0.78125,1)
--(axis cs:0.765625,1)
--(axis cs:0.75,1)
--(axis cs:0.734375,1)
--(axis cs:0.71875,1)
--(axis cs:0.703125,1)
--(axis cs:0.6875,1)
--(axis cs:0.671875,1)
--(axis cs:0.65625,1)
--(axis cs:0.640625,1)
--(axis cs:0.625,1)
--(axis cs:0.609375,1)
--(axis cs:0.59375,1)
--(axis cs:0.578125,1)
--(axis cs:0.5625,1)
--(axis cs:0.546875,1)
--(axis cs:0.53125,1)
--(axis cs:0.515625,1)
--(axis cs:0.5,1)
--(axis cs:0.484375,1)
--(axis cs:0.46875,1)
--(axis cs:0.453125,1)
--(axis cs:0.4375,1)
--(axis cs:0.421875,1)
--(axis cs:0.40625,1)
--(axis cs:0.390625,1)
--(axis cs:0.375,1)
--(axis cs:0.359375,1)
--(axis cs:0.34375,1)
--(axis cs:0.328125,1)
--(axis cs:0.3125,1)
--(axis cs:0.296875,1)
--(axis cs:0.28125,1)
--(axis cs:0.265625,1)
--(axis cs:0.25,1)
--(axis cs:0.234375,1)
--(axis cs:0.21875,1)
--(axis cs:0.203125,1)
--(axis cs:0.1875,1)
--(axis cs:0.171875,1)
--(axis cs:0.15625,1)
--(axis cs:0.140625,1)
--(axis cs:0.125,1)
--(axis cs:0.109375,1)
--(axis cs:0.09375,1)
--(axis cs:0.078125,1)
--(axis cs:0.0625,1)
--(axis cs:0.046875,1)
--(axis cs:0.03125,1)
--(axis cs:0.015625,1)
--(axis cs:0,1)
--cycle;

\path [draw=darkgray, fill=darkgray] (axis cs:2.203125,0.796875)
--(axis cs:2.203125,-1)
--(axis cs:2.21875,-1)
--(axis cs:2.234375,-1)
--(axis cs:2.234375,0.765625)
--(axis cs:2.234375,0.765625)
--(axis cs:2.21875,0.78125)
--(axis cs:2.203125,0.796875)
--cycle;

\path [draw=black, line width=1.6726153846153846pt] (axis cs:0,0)
--(axis cs:0,1);

\path [draw=black, line width=1.6726153846153846pt] (axis cs:3,-1)
--(axis cs:3,0);

\path [draw=black, line width=0.83630769230769231pt] (axis cs:2.203125,-1)
--(axis cs:2.203125,0.796875);

\path [draw=black, line width=0.83630769230769231pt] (axis cs:2.234375,-1)
--(axis cs:2.234375,0.765625);

\addplot [line width=1.6726153846153846pt, black, forget plot]
table {%
0 1
0.015625 1
0.03125 1
0.046875 1
0.0625 1
0.078125 1
0.09375 1
0.109375 1
0.125 1
0.140625 1
0.15625 1
0.171875 1
0.1875 1
0.203125 1
0.21875 1
0.234375 1
0.25 1
0.265625 1
0.28125 1
0.296875 1
0.3125 1
0.328125 1
0.34375 1
0.359375 1
0.375 1
0.390625 1
0.40625 1
0.421875 1
0.4375 1
0.453125 1
0.46875 1
0.484375 1
0.5 1
0.515625 1
0.53125 1
0.546875 1
0.5625 1
0.578125 1
0.59375 1
0.609375 1
0.625 1
0.640625 1
0.65625 1
0.671875 1
0.6875 1
0.703125 1
0.71875 1
0.734375 1
0.75 1
0.765625 1
0.78125 1
0.796875 1
0.8125 1
0.828125 1
0.84375 1
0.859375 1
0.875 1
0.890625 1
0.90625 1
0.921875 1
0.9375 1
0.953125 1
0.96875 1
0.984375 1
1 1
1.015625 1
1.03125 1
1.046875 1
1.0625 1
1.078125 1
1.09375 1
1.109375 1
1.125 1
1.140625 1
1.15625 1
1.171875 1
1.1875 1
1.203125 1
1.21875 1
1.234375 1
1.25 1
1.265625 1
1.28125 1
1.296875 1
1.3125 1
1.328125 1
1.34375 1
1.359375 1
1.375 1
1.390625 1
1.40625 1
1.421875 1
1.4375 1
1.453125 1
1.46875 1
1.484375 1
1.5 1
1.515625 1
1.53125 1
1.546875 1
1.5625 1
1.578125 1
1.59375 1
1.609375 1
1.625 1
1.640625 1
1.65625 1
1.671875 1
1.6875 1
1.703125 1
1.71875 1
1.734375 1
1.75 1
1.765625 1
1.78125 1
1.796875 1
1.8125 1
1.828125 1
1.84375 1
1.859375 1
1.875 1
1.890625 1
1.90625 1
1.921875 1
1.9375 1
1.953125 1
1.96875 1
1.984375 1
2 1
2.015625 0.984375
2.03125 0.96875
2.046875 0.953125
2.0625 0.9375
2.078125 0.921875
2.09375 0.90625
2.109375 0.890625
2.125 0.875
2.140625 0.859375
2.15625 0.84375
2.171875 0.828125
2.1875 0.8125
2.203125 0.796875
2.21875 0.78125
2.234375 0.765625
2.25 0.75
2.265625 0.734375
2.28125 0.71875
2.296875 0.703125
2.3125 0.6875
2.328125 0.671875
2.34375 0.65625
2.359375 0.640625
2.375 0.625
2.390625 0.609375
2.40625 0.59375
2.421875 0.578125
2.4375 0.5625
2.453125 0.546875
2.46875 0.53125
2.484375 0.515625
2.5 0.5
2.515625 0.484375
2.53125 0.46875
2.546875 0.453125
2.5625 0.4375
2.578125 0.421875
2.59375 0.40625
2.609375 0.390625
2.625 0.375
2.640625 0.359375
2.65625 0.34375
2.671875 0.328125
2.6875 0.3125
2.703125 0.296875
2.71875 0.28125
2.734375 0.265625
2.75 0.25
2.765625 0.234375
2.78125 0.21875
2.796875 0.203125
2.8125 0.1875
2.828125 0.171875
2.84375 0.15625
2.859375 0.140625
2.875 0.125
2.890625 0.109375
2.90625 0.09375
2.921875 0.078125
2.9375 0.0625
2.953125 0.046875
2.96875 0.03125
2.984375 0.015625
3 0
};
\addplot [line width=1.6726153846153846pt, black, forget plot]
table {%
0 -0
0.015625 -0.015625
0.03125 -0.03125
0.046875 -0.046875
0.0625 -0.0625
0.078125 -0.078125
0.09375 -0.09375
0.109375 -0.109375
0.125 -0.125
0.140625 -0.140625
0.15625 -0.15625
0.171875 -0.171875
0.1875 -0.1875
0.203125 -0.203125
0.21875 -0.21875
0.234375 -0.234375
0.25 -0.25
0.265625 -0.265625
0.28125 -0.28125
0.296875 -0.296875
0.3125 -0.3125
0.328125 -0.328125
0.34375 -0.34375
0.359375 -0.359375
0.375 -0.375
0.390625 -0.390625
0.40625 -0.40625
0.421875 -0.421875
0.4375 -0.4375
0.453125 -0.453125
0.46875 -0.46875
0.484375 -0.484375
0.5 -0.5
0.515625 -0.515625
0.53125 -0.53125
0.546875 -0.546875
0.5625 -0.5625
0.578125 -0.578125
0.59375 -0.59375
0.609375 -0.609375
0.625 -0.625
0.640625 -0.640625
0.65625 -0.65625
0.671875 -0.671875
0.6875 -0.6875
0.703125 -0.703125
0.71875 -0.71875
0.734375 -0.734375
0.75 -0.75
0.765625 -0.765625
0.78125 -0.78125
0.796875 -0.796875
0.8125 -0.8125
0.828125 -0.828125
0.84375 -0.84375
0.859375 -0.859375
0.875 -0.875
0.890625 -0.890625
0.90625 -0.90625
0.921875 -0.921875
0.9375 -0.9375
0.953125 -0.953125
0.96875 -0.96875
0.984375 -0.984375
1 -1
1.015625 -1
1.03125 -1
1.046875 -1
1.0625 -1
1.078125 -1
1.09375 -1
1.109375 -1
1.125 -1
1.140625 -1
1.15625 -1
1.171875 -1
1.1875 -1
1.203125 -1
1.21875 -1
1.234375 -1
1.25 -1
1.265625 -1
1.28125 -1
1.296875 -1
1.3125 -1
1.328125 -1
1.34375 -1
1.359375 -1
1.375 -1
1.390625 -1
1.40625 -1
1.421875 -1
1.4375 -1
1.453125 -1
1.46875 -1
1.484375 -1
1.5 -1
1.515625 -1
1.53125 -1
1.546875 -1
1.5625 -1
1.578125 -1
1.59375 -1
1.609375 -1
1.625 -1
1.640625 -1
1.65625 -1
1.671875 -1
1.6875 -1
1.703125 -1
1.71875 -1
1.734375 -1
1.75 -1
1.765625 -1
1.78125 -1
1.796875 -1
1.8125 -1
1.828125 -1
1.84375 -1
1.859375 -1
1.875 -1
1.890625 -1
1.90625 -1
1.921875 -1
1.9375 -1
1.953125 -1
1.96875 -1
1.984375 -1
2 -1
2.015625 -1
2.03125 -1
2.046875 -1
2.0625 -1
2.078125 -1
2.09375 -1
2.109375 -1
2.125 -1
2.140625 -1
2.15625 -1
2.171875 -1
2.1875 -1
2.203125 -1
2.21875 -1
2.234375 -1
2.25 -1
2.265625 -1
2.28125 -1
2.296875 -1
2.3125 -1
2.328125 -1
2.34375 -1
2.359375 -1
2.375 -1
2.390625 -1
2.40625 -1
2.421875 -1
2.4375 -1
2.453125 -1
2.46875 -1
2.484375 -1
2.5 -1
2.515625 -1
2.53125 -1
2.546875 -1
2.5625 -1
2.578125 -1
2.59375 -1
2.609375 -1
2.625 -1
2.640625 -1
2.65625 -1
2.671875 -1
2.6875 -1
2.703125 -1
2.71875 -1
2.734375 -1
2.75 -1
2.765625 -1
2.78125 -1
2.796875 -1
2.8125 -1
2.828125 -1
2.84375 -1
2.859375 -1
2.875 -1
2.890625 -1
2.90625 -1
2.921875 -1
2.9375 -1
2.953125 -1
2.96875 -1
2.984375 -1
3 -1
};
\node at (axis cs:2.115,-1.2)[
  scale=1,
  anchor=base west,
  text=black,
  rotate=0.0
]{ $E(x)$};
\end{axis}

\end{tikzpicture}
    }
    
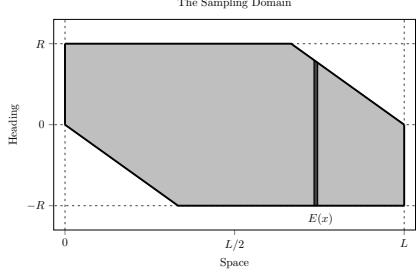
\captionof{figure}{A plot of the naive sensing domain $E_2(x)$ see
    \cref{Example:Naive}.
    }\label{Fig:DomainOfIntegration}
\end{minipage}\hfill
\begin{minipage}[t]{0.45\textwidth}
    \resizebox{0.95\textwidth}{!}{%
\begin{tikzpicture}

\begin{axis}[
title={The Sampling Domain},
xlabel={Space},
ylabel={Heading},
xmin=-0.1, xmax=3.1,
ymin=-1.3, ymax=1.3,
width=12.6cm,
height=8cm,
xtick={0,1.5,3},
xticklabels={$0$,$L/2$,$L$},
ytick={-1,0,1},
yticklabels={$-R$,$0$,$R$},
tick align=outside,
tick pos=left,
x grid style={lightgray!92.026143790849673!black},
y grid style={lightgray!92.026143790849673!black}
]
\path [draw=black, draw opacity=0.75, line width=0.83630769230769231pt, dash pattern=on 2pt off 3pt] (axis cs:0,-1.3)
--(axis cs:0,1.3);

\path [draw=black, draw opacity=0.75, line width=0.83630769230769231pt, dash pattern=on 2pt off 3pt] (axis cs:3,-1.3)
--(axis cs:3,1.3);

\path [draw=black, draw opacity=0.75, line width=0.83630769230769231pt, dash pattern=on 2pt off 3pt] (axis cs:-0.5,-1)
--(axis cs:3.5,-1);

\path [draw=black, draw opacity=0.75, line width=0.83630769230769231pt, dash pattern=on 2pt off 3pt] (axis cs:-0.5,0)
--(axis cs:3.5,0);

\path [draw=black, draw opacity=0.75, line width=0.83630769230769231pt, dash pattern=on 2pt off 3pt] (axis cs:-0.5,1)
--(axis cs:3.5,1);

\path [draw=lightgray, fill=lightgray] (axis cs:0,1)
--(axis cs:0,1)
--(axis cs:0.015625,0.96875)
--(axis cs:0.03125,0.9375)
--(axis cs:0.046875,0.90625)
--(axis cs:0.0625,0.875)
--(axis cs:0.078125,0.84375)
--(axis cs:0.09375,0.8125)
--(axis cs:0.109375,0.78125)
--(axis cs:0.125,0.75)
--(axis cs:0.140625,0.71875)
--(axis cs:0.15625,0.6875)
--(axis cs:0.171875,0.65625)
--(axis cs:0.1875,0.625)
--(axis cs:0.203125,0.59375)
--(axis cs:0.21875,0.5625)
--(axis cs:0.234375,0.53125)
--(axis cs:0.25,0.5)
--(axis cs:0.265625,0.46875)
--(axis cs:0.28125,0.4375)
--(axis cs:0.296875,0.40625)
--(axis cs:0.3125,0.375)
--(axis cs:0.328125,0.34375)
--(axis cs:0.34375,0.3125)
--(axis cs:0.359375,0.28125)
--(axis cs:0.375,0.25)
--(axis cs:0.390625,0.21875)
--(axis cs:0.40625,0.1875)
--(axis cs:0.421875,0.15625)
--(axis cs:0.4375,0.125)
--(axis cs:0.453125,0.09375)
--(axis cs:0.46875,0.0625)
--(axis cs:0.484375,0.03125)
--(axis cs:0.5,0)
--(axis cs:0.515625,-0.03125)
--(axis cs:0.53125,-0.0625)
--(axis cs:0.546875,-0.09375)
--(axis cs:0.5625,-0.125)
--(axis cs:0.578125,-0.15625)
--(axis cs:0.59375,-0.1875)
--(axis cs:0.609375,-0.21875)
--(axis cs:0.625,-0.25)
--(axis cs:0.640625,-0.28125)
--(axis cs:0.65625,-0.3125)
--(axis cs:0.671875,-0.34375)
--(axis cs:0.6875,-0.375)
--(axis cs:0.703125,-0.40625)
--(axis cs:0.71875,-0.4375)
--(axis cs:0.734375,-0.46875)
--(axis cs:0.75,-0.5)
--(axis cs:0.765625,-0.53125)
--(axis cs:0.78125,-0.5625)
--(axis cs:0.796875,-0.59375)
--(axis cs:0.8125,-0.625)
--(axis cs:0.828125,-0.65625)
--(axis cs:0.84375,-0.6875)
--(axis cs:0.859375,-0.71875)
--(axis cs:0.875,-0.75)
--(axis cs:0.890625,-0.78125)
--(axis cs:0.90625,-0.8125)
--(axis cs:0.921875,-0.84375)
--(axis cs:0.9375,-0.875)
--(axis cs:0.953125,-0.90625)
--(axis cs:0.96875,-0.9375)
--(axis cs:0.984375,-0.96875)
--(axis cs:1,-1)
--(axis cs:1.015625,-1)
--(axis cs:1.03125,-1)
--(axis cs:1.046875,-1)
--(axis cs:1.0625,-1)
--(axis cs:1.078125,-1)
--(axis cs:1.09375,-1)
--(axis cs:1.109375,-1)
--(axis cs:1.125,-1)
--(axis cs:1.140625,-1)
--(axis cs:1.15625,-1)
--(axis cs:1.171875,-1)
--(axis cs:1.1875,-1)
--(axis cs:1.203125,-1)
--(axis cs:1.21875,-1)
--(axis cs:1.234375,-1)
--(axis cs:1.25,-1)
--(axis cs:1.265625,-1)
--(axis cs:1.28125,-1)
--(axis cs:1.296875,-1)
--(axis cs:1.3125,-1)
--(axis cs:1.328125,-1)
--(axis cs:1.34375,-1)
--(axis cs:1.359375,-1)
--(axis cs:1.375,-1)
--(axis cs:1.390625,-1)
--(axis cs:1.40625,-1)
--(axis cs:1.421875,-1)
--(axis cs:1.4375,-1)
--(axis cs:1.453125,-1)
--(axis cs:1.46875,-1)
--(axis cs:1.484375,-1)
--(axis cs:1.5,-1)
--(axis cs:1.515625,-1)
--(axis cs:1.53125,-1)
--(axis cs:1.546875,-1)
--(axis cs:1.5625,-1)
--(axis cs:1.578125,-1)
--(axis cs:1.59375,-1)
--(axis cs:1.609375,-1)
--(axis cs:1.625,-1)
--(axis cs:1.640625,-1)
--(axis cs:1.65625,-1)
--(axis cs:1.671875,-1)
--(axis cs:1.6875,-1)
--(axis cs:1.703125,-1)
--(axis cs:1.71875,-1)
--(axis cs:1.734375,-1)
--(axis cs:1.75,-1)
--(axis cs:1.765625,-1)
--(axis cs:1.78125,-1)
--(axis cs:1.796875,-1)
--(axis cs:1.8125,-1)
--(axis cs:1.828125,-1)
--(axis cs:1.84375,-1)
--(axis cs:1.859375,-1)
--(axis cs:1.875,-1)
--(axis cs:1.890625,-1)
--(axis cs:1.90625,-1)
--(axis cs:1.921875,-1)
--(axis cs:1.9375,-1)
--(axis cs:1.953125,-1)
--(axis cs:1.96875,-1)
--(axis cs:1.984375,-1)
--(axis cs:2,-1)
--(axis cs:2.015625,-1)
--(axis cs:2.03125,-1)
--(axis cs:2.046875,-1)
--(axis cs:2.0625,-1)
--(axis cs:2.078125,-1)
--(axis cs:2.09375,-1)
--(axis cs:2.109375,-1)
--(axis cs:2.125,-1)
--(axis cs:2.140625,-1)
--(axis cs:2.15625,-1)
--(axis cs:2.171875,-1)
--(axis cs:2.1875,-1)
--(axis cs:2.203125,-1)
--(axis cs:2.21875,-1)
--(axis cs:2.234375,-1)
--(axis cs:2.25,-1)
--(axis cs:2.265625,-1)
--(axis cs:2.28125,-1)
--(axis cs:2.296875,-1)
--(axis cs:2.3125,-1)
--(axis cs:2.328125,-1)
--(axis cs:2.34375,-1)
--(axis cs:2.359375,-1)
--(axis cs:2.375,-1)
--(axis cs:2.390625,-1)
--(axis cs:2.40625,-1)
--(axis cs:2.421875,-1)
--(axis cs:2.4375,-1)
--(axis cs:2.453125,-1)
--(axis cs:2.46875,-1)
--(axis cs:2.484375,-1)
--(axis cs:2.5,-1)
--(axis cs:2.515625,-1)
--(axis cs:2.53125,-1)
--(axis cs:2.546875,-1)
--(axis cs:2.5625,-1)
--(axis cs:2.578125,-1)
--(axis cs:2.59375,-1)
--(axis cs:2.609375,-1)
--(axis cs:2.625,-1)
--(axis cs:2.640625,-1)
--(axis cs:2.65625,-1)
--(axis cs:2.671875,-1)
--(axis cs:2.6875,-1)
--(axis cs:2.703125,-1)
--(axis cs:2.71875,-1)
--(axis cs:2.734375,-1)
--(axis cs:2.75,-1)
--(axis cs:2.765625,-1)
--(axis cs:2.78125,-1)
--(axis cs:2.796875,-1)
--(axis cs:2.8125,-1)
--(axis cs:2.828125,-1)
--(axis cs:2.84375,-1)
--(axis cs:2.859375,-1)
--(axis cs:2.875,-1)
--(axis cs:2.890625,-1)
--(axis cs:2.90625,-1)
--(axis cs:2.921875,-1)
--(axis cs:2.9375,-1)
--(axis cs:2.953125,-1)
--(axis cs:2.96875,-1)
--(axis cs:2.984375,-1)
--(axis cs:3,-1)
--(axis cs:3,-1)
--(axis cs:3,-1)
--(axis cs:2.984375,-0.96875)
--(axis cs:2.96875,-0.9375)
--(axis cs:2.953125,-0.90625)
--(axis cs:2.9375,-0.875)
--(axis cs:2.921875,-0.84375)
--(axis cs:2.90625,-0.8125)
--(axis cs:2.890625,-0.78125)
--(axis cs:2.875,-0.75)
--(axis cs:2.859375,-0.71875)
--(axis cs:2.84375,-0.6875)
--(axis cs:2.828125,-0.65625)
--(axis cs:2.8125,-0.625)
--(axis cs:2.796875,-0.59375)
--(axis cs:2.78125,-0.5625)
--(axis cs:2.765625,-0.53125)
--(axis cs:2.75,-0.5)
--(axis cs:2.734375,-0.46875)
--(axis cs:2.71875,-0.4375)
--(axis cs:2.703125,-0.40625)
--(axis cs:2.6875,-0.375)
--(axis cs:2.671875,-0.34375)
--(axis cs:2.65625,-0.3125)
--(axis cs:2.640625,-0.28125)
--(axis cs:2.625,-0.25)
--(axis cs:2.609375,-0.21875)
--(axis cs:2.59375,-0.1875)
--(axis cs:2.578125,-0.15625)
--(axis cs:2.5625,-0.125)
--(axis cs:2.546875,-0.09375)
--(axis cs:2.53125,-0.0625)
--(axis cs:2.515625,-0.03125)
--(axis cs:2.5,0)
--(axis cs:2.484375,0.03125)
--(axis cs:2.46875,0.0625)
--(axis cs:2.453125,0.09375)
--(axis cs:2.4375,0.125)
--(axis cs:2.421875,0.15625)
--(axis cs:2.40625,0.1875)
--(axis cs:2.390625,0.21875)
--(axis cs:2.375,0.25)
--(axis cs:2.359375,0.28125)
--(axis cs:2.34375,0.3125)
--(axis cs:2.328125,0.34375)
--(axis cs:2.3125,0.375)
--(axis cs:2.296875,0.40625)
--(axis cs:2.28125,0.4375)
--(axis cs:2.265625,0.46875)
--(axis cs:2.25,0.5)
--(axis cs:2.234375,0.53125)
--(axis cs:2.21875,0.5625)
--(axis cs:2.203125,0.59375)
--(axis cs:2.1875,0.625)
--(axis cs:2.171875,0.65625)
--(axis cs:2.15625,0.6875)
--(axis cs:2.140625,0.71875)
--(axis cs:2.125,0.75)
--(axis cs:2.109375,0.78125)
--(axis cs:2.09375,0.8125)
--(axis cs:2.078125,0.84375)
--(axis cs:2.0625,0.875)
--(axis cs:2.046875,0.90625)
--(axis cs:2.03125,0.9375)
--(axis cs:2.015625,0.96875)
--(axis cs:2,1)
--(axis cs:1.984375,1)
--(axis cs:1.96875,1)
--(axis cs:1.953125,1)
--(axis cs:1.9375,1)
--(axis cs:1.921875,1)
--(axis cs:1.90625,1)
--(axis cs:1.890625,1)
--(axis cs:1.875,1)
--(axis cs:1.859375,1)
--(axis cs:1.84375,1)
--(axis cs:1.828125,1)
--(axis cs:1.8125,1)
--(axis cs:1.796875,1)
--(axis cs:1.78125,1)
--(axis cs:1.765625,1)
--(axis cs:1.75,1)
--(axis cs:1.734375,1)
--(axis cs:1.71875,1)
--(axis cs:1.703125,1)
--(axis cs:1.6875,1)
--(axis cs:1.671875,1)
--(axis cs:1.65625,1)
--(axis cs:1.640625,1)
--(axis cs:1.625,1)
--(axis cs:1.609375,1)
--(axis cs:1.59375,1)
--(axis cs:1.578125,1)
--(axis cs:1.5625,1)
--(axis cs:1.546875,1)
--(axis cs:1.53125,1)
--(axis cs:1.515625,1)
--(axis cs:1.5,1)
--(axis cs:1.484375,1)
--(axis cs:1.46875,1)
--(axis cs:1.453125,1)
--(axis cs:1.4375,1)
--(axis cs:1.421875,1)
--(axis cs:1.40625,1)
--(axis cs:1.390625,1)
--(axis cs:1.375,1)
--(axis cs:1.359375,1)
--(axis cs:1.34375,1)
--(axis cs:1.328125,1)
--(axis cs:1.3125,1)
--(axis cs:1.296875,1)
--(axis cs:1.28125,1)
--(axis cs:1.265625,1)
--(axis cs:1.25,1)
--(axis cs:1.234375,1)
--(axis cs:1.21875,1)
--(axis cs:1.203125,1)
--(axis cs:1.1875,1)
--(axis cs:1.171875,1)
--(axis cs:1.15625,1)
--(axis cs:1.140625,1)
--(axis cs:1.125,1)
--(axis cs:1.109375,1)
--(axis cs:1.09375,1)
--(axis cs:1.078125,1)
--(axis cs:1.0625,1)
--(axis cs:1.046875,1)
--(axis cs:1.03125,1)
--(axis cs:1.015625,1)
--(axis cs:1,1)
--(axis cs:0.984375,1)
--(axis cs:0.96875,1)
--(axis cs:0.953125,1)
--(axis cs:0.9375,1)
--(axis cs:0.921875,1)
--(axis cs:0.90625,1)
--(axis cs:0.890625,1)
--(axis cs:0.875,1)
--(axis cs:0.859375,1)
--(axis cs:0.84375,1)
--(axis cs:0.828125,1)
--(axis cs:0.8125,1)
--(axis cs:0.796875,1)
--(axis cs:0.78125,1)
--(axis cs:0.765625,1)
--(axis cs:0.75,1)
--(axis cs:0.734375,1)
--(axis cs:0.71875,1)
--(axis cs:0.703125,1)
--(axis cs:0.6875,1)
--(axis cs:0.671875,1)
--(axis cs:0.65625,1)
--(axis cs:0.640625,1)
--(axis cs:0.625,1)
--(axis cs:0.609375,1)
--(axis cs:0.59375,1)
--(axis cs:0.578125,1)
--(axis cs:0.5625,1)
--(axis cs:0.546875,1)
--(axis cs:0.53125,1)
--(axis cs:0.515625,1)
--(axis cs:0.5,1)
--(axis cs:0.484375,1)
--(axis cs:0.46875,1)
--(axis cs:0.453125,1)
--(axis cs:0.4375,1)
--(axis cs:0.421875,1)
--(axis cs:0.40625,1)
--(axis cs:0.390625,1)
--(axis cs:0.375,1)
--(axis cs:0.359375,1)
--(axis cs:0.34375,1)
--(axis cs:0.328125,1)
--(axis cs:0.3125,1)
--(axis cs:0.296875,1)
--(axis cs:0.28125,1)
--(axis cs:0.265625,1)
--(axis cs:0.25,1)
--(axis cs:0.234375,1)
--(axis cs:0.21875,1)
--(axis cs:0.203125,1)
--(axis cs:0.1875,1)
--(axis cs:0.171875,1)
--(axis cs:0.15625,1)
--(axis cs:0.140625,1)
--(axis cs:0.125,1)
--(axis cs:0.109375,1)
--(axis cs:0.09375,1)
--(axis cs:0.078125,1)
--(axis cs:0.0625,1)
--(axis cs:0.046875,1)
--(axis cs:0.03125,1)
--(axis cs:0.015625,1)
--(axis cs:0,1)
--cycle;

\path [draw=darkgray, fill=darkgray] (axis cs:2.203125,0.59375)
--(axis cs:2.203125,-1)
--(axis cs:2.21875,-1)
--(axis cs:2.234375,-1)
--(axis cs:2.234375,0.53125)
--(axis cs:2.234375,0.53125)
--(axis cs:2.21875,0.5625)
--(axis cs:2.203125,0.59375)
--cycle;

\path [draw=black, line width=0.83630769230769231pt] (axis cs:2.203125,-1)
--(axis cs:2.203125,0.59375);

\path [draw=black, line width=0.83630769230769231pt] (axis cs:2.234375,-1)
--(axis cs:2.234375,0.53125);

\addplot [line width=1.6726153846153846pt, black, forget plot]
table {%
0 1
0.015625 1
0.03125 1
0.046875 1
0.0625 1
0.078125 1
0.09375 1
0.109375 1
0.125 1
0.140625 1
0.15625 1
0.171875 1
0.1875 1
0.203125 1
0.21875 1
0.234375 1
0.25 1
0.265625 1
0.28125 1
0.296875 1
0.3125 1
0.328125 1
0.34375 1
0.359375 1
0.375 1
0.390625 1
0.40625 1
0.421875 1
0.4375 1
0.453125 1
0.46875 1
0.484375 1
0.5 1
0.515625 1
0.53125 1
0.546875 1
0.5625 1
0.578125 1
0.59375 1
0.609375 1
0.625 1
0.640625 1
0.65625 1
0.671875 1
0.6875 1
0.703125 1
0.71875 1
0.734375 1
0.75 1
0.765625 1
0.78125 1
0.796875 1
0.8125 1
0.828125 1
0.84375 1
0.859375 1
0.875 1
0.890625 1
0.90625 1
0.921875 1
0.9375 1
0.953125 1
0.96875 1
0.984375 1
1 1
1.015625 1
1.03125 1
1.046875 1
1.0625 1
1.078125 1
1.09375 1
1.109375 1
1.125 1
1.140625 1
1.15625 1
1.171875 1
1.1875 1
1.203125 1
1.21875 1
1.234375 1
1.25 1
1.265625 1
1.28125 1
1.296875 1
1.3125 1
1.328125 1
1.34375 1
1.359375 1
1.375 1
1.390625 1
1.40625 1
1.421875 1
1.4375 1
1.453125 1
1.46875 1
1.484375 1
1.5 1
1.515625 1
1.53125 1
1.546875 1
1.5625 1
1.578125 1
1.59375 1
1.609375 1
1.625 1
1.640625 1
1.65625 1
1.671875 1
1.6875 1
1.703125 1
1.71875 1
1.734375 1
1.75 1
1.765625 1
1.78125 1
1.796875 1
1.8125 1
1.828125 1
1.84375 1
1.859375 1
1.875 1
1.890625 1
1.90625 1
1.921875 1
1.9375 1
1.953125 1
1.96875 1
1.984375 1
2 1
2.015625 0.96875
2.03125 0.9375
2.046875 0.90625
2.0625 0.875
2.078125 0.84375
2.09375 0.8125
2.109375 0.78125
2.125 0.75
2.140625 0.71875
2.15625 0.6875
2.171875 0.65625
2.1875 0.625
2.203125 0.59375
2.21875 0.5625
2.234375 0.53125
2.25 0.5
2.265625 0.46875
2.28125 0.4375
2.296875 0.40625
2.3125 0.375
2.328125 0.34375
2.34375 0.3125
2.359375 0.28125
2.375 0.25
2.390625 0.21875
2.40625 0.1875
2.421875 0.15625
2.4375 0.125
2.453125 0.09375
2.46875 0.0625
2.484375 0.03125
2.5 0
2.515625 -0.03125
2.53125 -0.0625
2.546875 -0.09375
2.5625 -0.125
2.578125 -0.15625
2.59375 -0.1875
2.609375 -0.21875
2.625 -0.25
2.640625 -0.28125
2.65625 -0.3125
2.671875 -0.34375
2.6875 -0.375
2.703125 -0.40625
2.71875 -0.4375
2.734375 -0.46875
2.75 -0.5
2.765625 -0.53125
2.78125 -0.5625
2.796875 -0.59375
2.8125 -0.625
2.828125 -0.65625
2.84375 -0.6875
2.859375 -0.71875
2.875 -0.75
2.890625 -0.78125
2.90625 -0.8125
2.921875 -0.84375
2.9375 -0.875
2.953125 -0.90625
2.96875 -0.9375
2.984375 -0.96875
3 -1
};
\addplot [line width=1.6726153846153846pt, black, forget plot]
table {%
0 1
0.015625 0.96875
0.03125 0.9375
0.046875 0.90625
0.0625 0.875
0.078125 0.84375
0.09375 0.8125
0.109375 0.78125
0.125 0.75
0.140625 0.71875
0.15625 0.6875
0.171875 0.65625
0.1875 0.625
0.203125 0.59375
0.21875 0.5625
0.234375 0.53125
0.25 0.5
0.265625 0.46875
0.28125 0.4375
0.296875 0.40625
0.3125 0.375
0.328125 0.34375
0.34375 0.3125
0.359375 0.28125
0.375 0.25
0.390625 0.21875
0.40625 0.1875
0.421875 0.15625
0.4375 0.125
0.453125 0.09375
0.46875 0.0625
0.484375 0.03125
0.5 0
0.515625 -0.03125
0.53125 -0.0625
0.546875 -0.09375
0.5625 -0.125
0.578125 -0.15625
0.59375 -0.1875
0.609375 -0.21875
0.625 -0.25
0.640625 -0.28125
0.65625 -0.3125
0.671875 -0.34375
0.6875 -0.375
0.703125 -0.40625
0.71875 -0.4375
0.734375 -0.46875
0.75 -0.5
0.765625 -0.53125
0.78125 -0.5625
0.796875 -0.59375
0.8125 -0.625
0.828125 -0.65625
0.84375 -0.6875
0.859375 -0.71875
0.875 -0.75
0.890625 -0.78125
0.90625 -0.8125
0.921875 -0.84375
0.9375 -0.875
0.953125 -0.90625
0.96875 -0.9375
0.984375 -0.96875
1 -1
1.015625 -1
1.03125 -1
1.046875 -1
1.0625 -1
1.078125 -1
1.09375 -1
1.109375 -1
1.125 -1
1.140625 -1
1.15625 -1
1.171875 -1
1.1875 -1
1.203125 -1
1.21875 -1
1.234375 -1
1.25 -1
1.265625 -1
1.28125 -1
1.296875 -1
1.3125 -1
1.328125 -1
1.34375 -1
1.359375 -1
1.375 -1
1.390625 -1
1.40625 -1
1.421875 -1
1.4375 -1
1.453125 -1
1.46875 -1
1.484375 -1
1.5 -1
1.515625 -1
1.53125 -1
1.546875 -1
1.5625 -1
1.578125 -1
1.59375 -1
1.609375 -1
1.625 -1
1.640625 -1
1.65625 -1
1.671875 -1
1.6875 -1
1.703125 -1
1.71875 -1
1.734375 -1
1.75 -1
1.765625 -1
1.78125 -1
1.796875 -1
1.8125 -1
1.828125 -1
1.84375 -1
1.859375 -1
1.875 -1
1.890625 -1
1.90625 -1
1.921875 -1
1.9375 -1
1.953125 -1
1.96875 -1
1.984375 -1
2 -1
2.015625 -1
2.03125 -1
2.046875 -1
2.0625 -1
2.078125 -1
2.09375 -1
2.109375 -1
2.125 -1
2.140625 -1
2.15625 -1
2.171875 -1
2.1875 -1
2.203125 -1
2.21875 -1
2.234375 -1
2.25 -1
2.265625 -1
2.28125 -1
2.296875 -1
2.3125 -1
2.328125 -1
2.34375 -1
2.359375 -1
2.375 -1
2.390625 -1
2.40625 -1
2.421875 -1
2.4375 -1
2.453125 -1
2.46875 -1
2.484375 -1
2.5 -1
2.515625 -1
2.53125 -1
2.546875 -1
2.5625 -1
2.578125 -1
2.59375 -1
2.609375 -1
2.625 -1
2.640625 -1
2.65625 -1
2.671875 -1
2.6875 -1
2.703125 -1
2.71875 -1
2.734375 -1
2.75 -1
2.765625 -1
2.78125 -1
2.796875 -1
2.8125 -1
2.828125 -1
2.84375 -1
2.859375 -1
2.875 -1
2.890625 -1
2.90625 -1
2.921875 -1
2.9375 -1
2.953125 -1
2.96875 -1
2.984375 -1
3 -1
};
\node at (axis cs:2.115,-1.2)[
  scale=1.0,
  anchor=base west,
  text=black,
  rotate=0.0
]{ $E(x)$};
\end{axis}

\end{tikzpicture}
    }
    
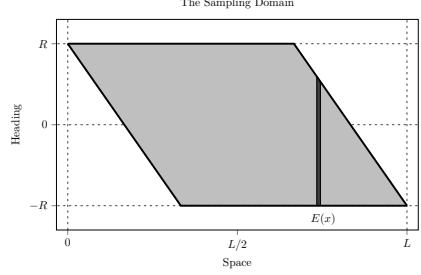
\captionof{figure}{A plot of the no flux sensing domain $E_3(x)$ see
    \cref{Example:NoFlux}.
    }\label{Fig:NoFluxDomainOfIntegration}
\end{minipage}

\begin{example}[Adhesive / Repulsive]\label{Example:WeaklyAdhesive}
    The framework developed here can be used to explicitly model adhesion or
    repulsion by the domain boundary. For that we assume that the interaction
    force with the boundary is proportional to the extent of cell protrusions
    that attach to the boundary, which corresponds to the amount of cell
    protrusion that would reach out of the domain if there was no boundary
    (see \cref{fig:bioboundary1}~(B)). For
    example at $x\in (0,R)$. If the cell extends to $x-R$, then the interval
    $[x-R,0)$ is outside of the domain. We assume that instead of leaving the
    domain, the protrusion interacts with the boundary, given boundary adhesion
    terms of the form
    \begin{eqnarray*}
        a^0(x) \coloneqq \beta^0 \int_{-R}^{-x} \Omega(r) \dd r, &\qquad& x\in[0,R)\\
        a^L(x) \coloneqq \beta^L\int_{L-x}^R \Omega(r) \dd r, && x\in(L-R,L]
    \end{eqnarray*}
    where $\beta^0$ and $\beta^L$ are constants of proportionality.
    $\beta^0,\beta^L>0$ describes boundary adhesion, while $\beta^0, \beta^L<0$
    describes boundary repulsion.

    In this case we define the adhesion operator as linear combination of all
    relevant adhesive effects. Using indicator functions $\chi_A(r)$ we can
    write
    \begin{align}\label{Fadhesive}
        \K[u](x,t) &= \int_{E(x)} H(u(x+r, t)) \Omega(r) \dd r\nonumber\\
        &            + \beta^0 \chi_{[0,R]}(x)  \int_{-R}^{-x} \Omega(r) \dd r
            + \beta^L \chi_{[L-R,L]}(x) \int_{L-x}^{R} \Omega(r) \dd r \nonumber\\
        &= \int_{-R}^R\left(\chi_{E(x)}H(u(x+r,t))+\beta^0\chi_{[-R,-x]}(x) +
        \beta^L\chi_{[L-x,R]} \right)\Omega(r) \dd r,
    \end{align}
    where we omitted the $r$-dependence in the indicator functions for brevity.
    Here $E(x)$ is any suitable sampling domain as defined in
    Definition~\ref{def:slice}. Further we note that whenever
    \[
        \beta^0 = \frac{1}{2} \int_{E(0)} H(u(r, t))\Omega(r) \dd r,
    \]
    a similar expression can be found for $\beta^L$, then $\K[u]$ satisfies condition~\eqref{nofluxK}.
\end{example}

\paragraph{Example Summary}
To combine all different possible examples, we define a general integral operator as
\begin{equation}\label{generalK}
    \K[u](x,t) = \int_{-R}^R F(u(x+r,t), x, r) \Omega(r) \dd r,
\end{equation}
with
\begin{eqnarray}\label{Fexamples}
\small
\mbox{periodic} &\quad& F(u,x,r) = H(u(x+r,t)) \nonumber\\
\mbox{naive case} & & F(u,x,r) =\chi_{E_2(x)} H(u(x+r, t))\nonumber\\
\mbox{non-flux} & & F(u,x,r) =\chi_{E_3(x)} H(u(x+r, t))\\
\mbox{general case} & & F(u,x,r) =\chi_{E(x)} H(u(x+r, t)), \quad E(x) \mbox{ is suitable}\nonumber\\
\mbox{bdy interac.} & & F(u,x,r) =\chi_{E(x)} H(u(x+r, t))+\beta^0\chi_{[-R,-x]} + \beta^L \chi_{[L-x,R]} \nonumber\\
&& E(x) \mbox{ is suitable}, \quad \beta^0, \beta^L \mbox{ are constants},\nonumber
\end{eqnarray}
where ``bdy interac.'' stands for ``adhesive or repulsive interaction with the boundary''.
We will summarize general assumptions on $F$ in the next section.

\section{Local Existence and Uniqueness}\label{S:existence}

We consider non-local adhesion models on a one-dimensional bounded domain $[0,L]$ with independent
no-flux boundary condition:
\begin{equation}\label{mainmodel}
\left\{\begin{array}{rcl}
    u_t(x,t) &=& Du_{xx}(x,t) - \alpha \left(u(x,t) \K[u](x, t) \right)_x \\[1ex]
    \K[u](x, t) &=& \int_{-R}^R F(u(x+r,t),x,r)\Omega(r) \dd r\\[1ex]
    u(x,0) &=& u_0(x) \geq 0 \\[1ex]
    0 &=& u_x(0, t) = u_x(L, t) \\[1ex]
    &\K[u](x)&\ \mbox{satisfies condition~\eqref{nofluxK}}.
\end{array} \right.
\end{equation}
We introduce the function space
\[
    {\cal Y} \coloneqq \left\{ u \in H^1[0,L]\colon \int_{0}^{L} u(x) \dd x = m_0\right\},
\]
where $m_0 = \int_{0}^{L} u_0(x) \dd x$. We recall that the function space
${\cal Y}$ can be identified with the quotient space $H^1 / \mathbb{R}$. We then pick the
solution of equation~\eqref{mainmodel} to be the representative with mass $m_0$.
From~\cite{necas2011} we recall that this quotient space is Hilbert, and that its norm
$|u|_{H^1 / \mathbb{R}}$ is equivalent to the norm
\[
    ||u|| \coloneqq \left( \int_{0}^{L} u_x^2 \dd x \right)^{1/2}.
\]

We make the following general assumptions:
\begin{enumerate}[label=\textbf{(A\arabic*)},ref=(A\arabic*), leftmargin=*,labelindent=\parindent]
\item\label{Assumption:1} $u_0 \in {\cal Y},\ {\cal X} = \C^0\lb[0,T],\ {\cal Y}\cap L^\infty(0,L)\rb,\ T>0$.
\item\label{Assumption:2} $\Omega(r) = \frac{r}{|r|} \omega(r),\ \omega(r)=\omega(-r),\ \omega(r)\geq 0,\ \omega(R)=0,\ R>0. $
\item\label{Assumption:3} $V=[-R,R],\ \omega\in L^1(V)\cap L^\infty(V),\  \|\omega\|_{L^1[0,R]} = \frac{1}{2}$.
\item\label{Assumption:4} For each $x\in[0,L],\ r\in[-R,R]$ the kernel $F(u,x,r)$ is linearly
    bounded in $u$ and differentiable in $u$ with uniformly bounded and Lipschitz continuous derivative:
    \[
        |F(u,x,r)|\leq k_1 (1+|u|), \qquad \left|\frac{\partial }{\partial u} F(u,x,r)\right|\leq k_2.
    \]
    \[ \left|\frac{\partial}{\partial u} F(u,x,r) - \frac{\partial}{\partial u}F(v,x,r) \right|\leq k_3 |u-v| .\]
\item\label{Assumption:5} $F(u,x,r)$ is piecewise continuous as a function of $r$.
\item\label{Assumption:6} $x\mapsto\int_V F \; \Omega(r) \dd r $  is differentiable in $x$ with a
    bounded derivative. There are two constants $k_4, k_5>0$ such that
    \begin{eqnarray*}
        \left| \frac{\partial}{\partial x} \int_V F(u(x+r,t),x,r) \Omega(r) \dd r \right|_2
            &\leq& k_4\lb1+|u(\cdot,t)|_\infty\rb \\
            &\leq& k_5\lb1+|u(\cdot,t)|_{H^1}\rb,
    \end{eqnarray*}
for all $u\in\cX, t>0$.
\end{enumerate}

\begin{lemma} Assume~\ref{Assumption:1}--\ref{Assumption:3}. Further assume that
\begin{description}
    \item {\bf (A4')} $H(u)$ is linearly bounded with uniform bounded and Lipschitz continuous  derivative.
\end{description}
Then all of the above examples (\ref{Fexamples}) satisfy assumptions~\ref{Assumption:1}--\ref{Assumption:6}.
\end{lemma}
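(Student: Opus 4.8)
The plan is to verify the six assumptions simultaneously for all examples, exploiting that every kernel in~\eqref{Fexamples} has the common form $F(u,x,r)=\chi_{E(x)}(r)\,H(u(x+r,t))$, possibly augmented by the $u$-independent boundary terms $\beta^0\chi_{[-R,-x]}(r)+\beta^L\chi_{[L-x,R]}(r)$. I would therefore treat the generic kernel and only track how the indicator $\chi_{E(x)}$ and the additive terms interact with each estimate. Since \ref{Assumption:1}--\ref{Assumption:3} are assumed outright, only \ref{Assumption:4}, \ref{Assumption:5} and \ref{Assumption:6} require checking.

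For \ref{Assumption:4} and \ref{Assumption:5} the argument is short. Because $0\le\chi_{E(x)}(r)\le1$ and the boundary terms are bounded, $u$-independent functions, the linear bound on $F$, the uniform bound on $\partial_u F=\chi_{E(x)}H'$, and its Lipschitz continuity in $u$ are all inherited directly from the corresponding properties of $H$ postulated in \textbf{(A4')}; the additive boundary terms drop out of $\partial_u F$ and merely enlarge the constant $k_1$. For \ref{Assumption:5}, the indicator $\chi_{E(x)}(\cdot)$ is piecewise constant in $r$ with jumps only at $r=f_1(x)$ and $r=f_2(x)$, while $r\mapsto H(u(x+r))$ is continuous because $u\in H^1[0,L]\hookrightarrow\C^0[0,L]$ in one dimension and $H$ is continuous; a product of a piecewise-constant and a continuous function, plus the piecewise-constant boundary terms, is piecewise continuous in $r$.

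The real work is \ref{Assumption:6}. Here I would use that each suitable pair $f_1,f_2$ from \cref{def:slice} is continuous, monotone, equals $\mp R$ on the bulk $[R,L-R]$, and has uniformly bounded one-sided derivatives. Writing $\K[u](x)=\int_{f_1(x)}^{f_2(x)}H(u(x+r))\Omega(r)\dd r$ and differentiating by Leibniz' rule produces two boundary contributions, $H(u(x+f_2(x)))\Omega(f_2(x))f_2'(x)$ and $H(u(x+f_1(x)))\Omega(f_1(x))f_1'(x)$, together with the interior integral $\int_{f_1(x)}^{f_2(x)}H'(u(x+r))\,u_x(x+r)\,\Omega(r)\dd r$. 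The boundary terms are bounded pointwise by $C(1+\norm{u}_\infty)$ using the linear bound on $H$, $\norm{\omega}_\infty<\infty$, and the bounded one-sided derivatives of $f_{1,2}$; crucially, the condition $\omega(R)=0$ from \ref{Assumption:2} makes $\Omega(f_i)$ vanish at the transition points $x=R$ and $x=L-R$ (where $f_1=-R$, resp.\ $f_2=R$), so these boundary terms are continuous across the transitions and no spurious jump enters $\partial_x\K[u]$. For the interior integral I would bound its $L^2_x$ norm by Minkowski's integral inequality, moving the $L^2$ norm inside the $r$-integral and using that the sampling domain keeps $x+r\in[0,L]$, to obtain $\norm{H'}_\infty\norm{\Omega}_{L^1}\norm{u_x}_{L^2}\le C\norm{u}_{H^1}$.

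The main obstacle is reconciling the two bounds in \ref{Assumption:6}: the interior integral carries a factor $u_x$ and is thus naturally controlled by $\norm{u}_{H^1}$, not by $\norm{u}_\infty$. I would therefore record the pointwise boundary contributions in the form $k_4(1+\norm{u}_\infty)$ and then close the chain with the one-dimensional Sobolev embedding $\norm{u}_\infty\le C\norm{u}_{H^1}$, which yields the operational estimate $k_5(1+\norm{u}_{H^1})$ used by the existence theory. Finally, for the adhesive/repulsive example the extra terms $\beta^0\int_{-R}^{-x}\Omega\dd r$ and $\beta^L\int_{L-x}^{R}\Omega\dd r$ are $u$-independent, and differentiating their variable limits gives $\pm\beta^0\omega(x)$ and $\pm\beta^L\omega(L-x)$, both bounded by $\norm{\omega}_\infty$ and hence absorbed into $k_4$ without disturbing any $u$-dependent estimate.
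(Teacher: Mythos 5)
Your proposal is correct and follows essentially the same route as the paper: (A4) and (A5) are inherited directly from (A4') since the boundary terms are $u$-independent, and the real work is (A6), where your Leibniz-rule differentiation of $\int_{f_1(x)}^{f_2(x)}H(u(x+r))\Omega(r)\dd r$ produces exactly the same boundary-evaluation terms and interior $H'(u)\,u_x$ integral as the paper's distributional-derivative computation (Heaviside/delta formalism), estimated with the same ingredients (linear bound on $H$, bounded one-sided derivatives of $f_{1,2}$, boundedness of $\omega$) and closed with the one-dimensional Sobolev embedding. If anything you are slightly more careful than the paper: you correctly flag that the interior term carries $u_x$ and so is only controlled by $|u|_{H^1}$, not by $|u|_\infty$ (the paper's intermediate bound $k_4(1+|u|_\infty)$ glosses over this), and your observation that $\omega(R)=0$ removes any jump of the boundary terms at the transition points $x=R$ and $x=L-R$ is a correct refinement not spelled out in the paper.
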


\begin{proof}
The $u$-dependence in the examples~(\ref{Fexamples}) enters only
through $H(u)$. Hence assumption (A4') immediately implies
assumption~\ref{Assumption:4}.
Since $u$ is continuous and $H$ is continuous and the indicator functions are
piecewise continuous, then also $r\mapsto F(u,x,r)$ is piecewise continuous,
i.e.~\ref{Assumption:5}. The critical condition to show is
assumption~\ref{Assumption:6}. For this we consider the case of adhesive and repulsive
boundary conditions, as this proof
also includes the proof of~\ref{Assumption:6} for the other examples.
We have
\[
    F(u,x,r) =\chi_{E(x)} H(u(x+r, t))+\beta^0\chi_{[-R,-x]} + \beta^L \chi_{[L-x,R]}.
\]
Since $E(x)=\{r\in[-R,R]: f_1(x) \leq r\leq f_2(x) \}$ is a suitable slice, we
can compute the distributional derivative of $F$. We divide this into several
steps. Differentiating the integral term, we find
\begin{equation}\label{Eqn:DerivativeNonLocalTerm}
    \frac{\partial}{\partial x} \int_{V} F(x, u, r) \Omega(r) \dd r =
        \int_{V} \left[ F_u(u, x, r) u_x + F_x(u, x, r) \right] \Omega(r) \dd r.
\end{equation}
We use assumption~\ref{Assumption:4} to estimate the first term
\[
    \left| \int_V F_u(u,x,r) u_x(x+r,t) \Omega(r) \dd r \right|_2
        \leq k_2 |\Omega|_\infty |u_x|_2.
\]
The second term is more delicate. First we compute the distributional derivative
$F_x(u, x, r)$ for $r\in[-R,R]$ and $x\in [0,L]$:
\begin{align*}
    F_x(u,x,r)  &=
        H(u) \left[\mathcal{H}(r - f_1(x))\delta(f_2 - r) f_2'(x) -
                         \mathcal{H}(f_2 - r)\delta(r - f_1)f_1'(x)\right] +\\
        &\qquad \chi_{E(x)}(r) \frac{\partial H}{\partial x}
        + \beta^L\delta(r-L+x) - \beta^0 \delta(-x -r),
\end{align*}
where $\mathcal{H}$ is the heaviside function. We note that $H_x = 0$ (since we are only taking
the partial with respect to $x$ now). Integrating this term with weight $\Omega(r)$ over $V=[-R,R]$, and noting that $x\in[0,L]$, we get
\begin{align*}
   & \int_{V} F_x(u,x,r) \Omega(r) \dd r\\
        &= H(u(x + f_2(x)) f_2'(x) \Omega(f_2(x))
        - H(u(x + f_1(x))) f_1'(x) \Omega(f_1(x))  \\
        &\qquad- \beta^0 \chi_{[0, R)}(x) \Omega(-x)
        + \beta^L \chi_{(L-x, L]}(x) \Omega(L - x).
\end{align*}
Notice that all terms in the above expression only arise for $x$ close to the boundaries.
The terms involving $\beta^{0, L}$ are multiplied by the indicator functions of the
boundary region, while the other two terms are zero outside the boundary region, since
$f_{1,2}'(x) = 0$ (see Definition~\ref{def:slice}). Using this term we can estimate the
second term in equation~\eqref{Eqn:DerivativeNonLocalTerm} by
\[
    \left|\int_V F_x(u,x,r)\Omega(r) \dd r \right|_2
        \leq \Bigl(2 k_1 D (1+|u|_\infty) +|\beta^0| +|\beta^L|\Bigr) |\Omega|_\infty,
\]
where $D\coloneqq \max\left(|{f_1'}|_{\infty}, |{f_2'}|_{\infty}\right)$.
Together we find two constants $k_4, k_5 >0$ such that
\begin{eqnarray}\label{gradestimate}
\left|
\frac{\dd}{\dd x} \int_V F(u(x+r,t), x,r) \Omega(r) \dd r \right|_2
&\leq& k_4 \Bigl(1+ |u|_\infty\Bigr)\\
&\leq& k_5 \Bigl(1+ |u|_{H^1} \Bigr),
\end{eqnarray}
where the last estimate follows from the Sobolev embedding.

\end{proof}

We denote the solution semigroup $S(t)$ of the heat equation with homogeneous no-flux boundary conditions
\[ \left\{
\begin{array}{rcl}
u_t &=& D u_{xx} \\
0 &=& u_x(0,t)=u_x(L,t) \end{array}.
\right.\]

And we can write the formal solution of~\eqref{mainmodel} as a mild solution
\begin{definition}
$u\in \cX$ is called a mild solution of~\eqref{mainmodel} if
\begin{equation}\label{mildsolution}
u(x,t) =  S(t) u_0 - \alpha \int_0^t S(t-s)\left( u \int_V F(u(x+r,s),x,r)\Omega(r) \dd r\right)_x \dd s.
\end{equation}
\end{definition}

\begin{theorem}\label{t:existence} Assume~\ref{Assumption:1}--\ref{Assumption:6}. For
$T>0$ small enough there exists a unique mild solution $u\in\cX$ of~\eqref{mainmodel}.
\end{theorem}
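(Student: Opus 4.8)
The plan is to realize the mild-solution formula~\eqref{mildsolution} as the fixed point of the map
\[
\Phi[u](t) \coloneqq S(t)u_0 - \alpha\int_0^t S(t-s)\,\partial_x\bigl(u(\cdot,s)\,\K[u](\cdot,s)\bigr)\dd s
\]
on a closed ball $B_M \coloneqq \{u\in\cX : \|u - S(\cdot)u_0\|_{\cX}\leq M\}$ of the Banach space $\cX = \C^0([0,T],{\cal Y}\cap L^\infty(0,L))$, and then to make $T$ small enough that $\Phi$ is a contraction. The engine of every estimate is the smoothing property of the analytic Neumann heat semigroup: denoting by $A$ the (suitably shifted, positive) realization of $-D\partial_{xx}$ with the no-flux boundary conditions, one has $\|S(t)h\|_{H^1}\leq C\,t^{-1/2}\|h\|_{L^2}$ for $0<t\leq T$, since $A^{1/2}$ has domain $H^1$. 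Because the nonlinearity carries one spatial derivative, I keep that derivative outside the semigroup and only require $\partial_x(u\,\K[u])\in L^2$; the resulting kernel $(t-s)^{-1/2}$ is integrable and $\int_0^t(t-s)^{-1/2}\dd s = 2\sqrt t\to0$, which supplies exactly the smallness the argument needs. Since $H^1(0,L)\hookrightarrow L^\infty(0,L)$ in one dimension, control of the $H^1$-part of the $\cX$-norm automatically controls the $L^\infty$-part.

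Next I verify that $\Phi$ maps $\cX$ into itself, in particular that $\Phi[u](t)\in{\cal Y}$. Assumptions~\ref{Assumption:4} and~\ref{Assumption:6} give $\|\K[u]\|_\infty\leq C(1+\|u\|_\infty)$ and $\|\partial_x\K[u]\|_{L^2}\leq k_5(1+\|u\|_{H^1})$, whence
\[
\|\partial_x(u\,\K[u])\|_{L^2}\leq \|u_x\|_{L^2}\|\K[u]\|_\infty + \|u\|_\infty\|\partial_x\K[u]\|_{L^2}\leq C\bigl(1+\|u\|_{H^1}^2\bigr).
\]
Combining this with the smoothing estimate yields $\|\Phi[u](t)-S(t)u_0\|_{H^1}\leq C\alpha\sqrt T\,(1+\|u\|_{\cX}^2)$, so for $T$ small the ball $B_M$ is preserved; continuity in $t$ then follows from strong continuity of $S(t)$ and the uniform-in-$s$ bounds. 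The mean-value constraint $\int_0^L\Phi[u]\dd x = m_0$ is where condition~\eqref{nofluxK} is essential: $S(t)$ conserves mass, and integrating the nonlocal term gives $\int_0^L\partial_x(u\,\K[u])\dd x = \bigl[u\,\K[u]\bigr]_0^L$, which vanishes precisely because $\K[u](0)=\K[u](L)=0$.

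Finally I establish the contraction. For $u,v\in B_M$ I split $u\,\K[u]-v\,\K[v] = u(\K[u]-\K[v]) + (u-v)\K[v]$ and differentiate; the Lipschitz bound $|F_u(u)-F_u(v)|\leq k_3|u-v|$ of~\ref{Assumption:4} together with~\ref{Assumption:6} controls $\partial_x(\K[u]-\K[v])$ in $L^2$ by $C\|u-v\|_{\cX}$, so that $\|\partial_x(u\,\K[u]-v\,\K[v])\|_{L^2}\leq C(M)\|u-v\|_{\cX}$ on $[0,T]$. The same $(t-s)^{-1/2}$ kernel then gives $\|\Phi[u]-\Phi[v]\|_{\cX}\leq C(M)\alpha\sqrt T\,\|u-v\|_{\cX}$, and shrinking $T$ makes this constant less than one, so Banach's fixed point theorem produces the unique mild solution $u\in\cX$. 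The main obstacle is exactly the difference estimate for $\partial_x(\K[u]-\K[v])$: unlike the pointwise bound of~\ref{Assumption:6}, it requires tracking how the boundary and indicator structure of $F$ — the $\delta$-contributions at $r=f_{1,2}(x)$ computed in the preceding Lemma — depends Lipschitz-continuously on $u$, and it is here that the uniform bound on the one-sided derivatives of $f_{1,2}$ from Definition~\ref{def:slice} must be combined carefully with~\ref{Assumption:4}.
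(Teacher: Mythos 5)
Your proposal follows essentially the same route as the paper's proof: Banach's fixed point theorem applied to the mild-solution map on a ball in $\C^0([0,T],H^1\cap L^\infty)$, with the $L^2\to H^1$ smoothing of the Neumann heat semigroup supplying the integrable kernel $\int_0^t (t-s)^{-1/2}\dd s = 2\sqrt{t}$, the product-rule split of $\partial_x\bigl(u\,\K[u]\bigr)$ estimated via \ref{Assumption:4} and \ref{Assumption:6} for self-mapping, and the same decomposition $u\,\K[u]-v\,\K[v] = u(\K[u]-\K[v]) + (u-v)\K[v]$ (which the paper expands into five terms $I_1,\dots,I_5$) for the contraction. Your explicit verification that the mass constraint defining ${\cal Y}$ is preserved — using that the Neumann semigroup conserves mass and that $\int_0^L \partial_x(u\,\K[u])\dd x$ vanishes by condition~\eqref{nofluxK} — is a small useful addition that the paper leaves implicit, but otherwise the two arguments coincide.
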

\begin{proof}
Using this definition we can define a map $Q:\cX\to \cX$, where given $v\in \cX$, $u=Qv$ denotes the function
\begin{equation}\label{iteration}
u(x,t) =  S(t) u_0 - \alpha \int_0^t S(t-s)\left( v \int_V F(v(x+r,s),x,r)\Omega(r) \dd r\right)_x \dd s.
\end{equation}
We will show that this map has a unique fixed point in $\cX$.
Assume $v\in \cX$. By the Sobolev embedding this implies that $v\in \C^0([0,T], \C^0([0,L]))$. \\

\begin{enumerate}[label=\textbf{Step~\arabic*:},ref=Step~\arabic*,
                  leftmargin=*,labelindent=\parindent, wide]

\item For given $M>2\max\{|u_0|_{H^1}, |u_0|_\infty\} $ let $B_M(0)\subset H^1[0,L]\cap L^\infty(0,L)$ denote the ball of radius
$M$ in $H^1\cap L^\infty$. Let $W = \C^0([0,T], B_M(0))$, then we
show that for $T>0$ small enough we have $Q:W\to W$.
In the following estimates we ignore the arguments of the functions and we write $v=v(x,t)$,
$F=F(v(x+r,s),x,s)$, and $\Omega = \Omega(r)$. The crucial term is the integral term
in equation~\eqref{iteration}
\[
    \left(v\int_V F\;\Omega \dd r \right)_x = v_x \int_V F\;\Omega \dd r + v\frac{\dd}{\dd x} \int_V F\;\Omega \dd r.
\]
Then
\begin{eqnarray}
    \left|\left(v\int_V F\; \Omega \dd r\right)_x \right|_2 &\leq&
        \left| v_x\int_V F\; \Omega(r) \dd r \right|_2+\left| v \frac{\dd}{\dd x}\int_V F\;
            \Omega(r) \dd r\right|_2 \nonumber \\
    &\leq& k_1 |v_x|_2 \left(1+|v|_\infty\right)\int_V|\Omega(r)|\dd r
        + k_5 |v|_\infty (1+ |v|_{H^1})\nonumber  \\
    &\leq& \kappa\left( 1+|v|_{H^1} \right) \left(1+|v|_\infty \right)\nonumber \\
    &\leq& \kappa (1+M)^2,\label{oneplusM}
\end{eqnarray}
with $\kappa >0$ and we used $\int_V |\Omega(r)| \dd r =1$.

Now the heat solution semigroup regularizes \cite{Amann1995}
\begin{equation}\label{regularity}
 S(t): L^2[0,L] \to H^1[0,L] \mbox{ with norm } C t^{-1/2}.
 \end{equation}
Hence
\begin{eqnarray*}
    \left|\int_0^t S(t-s) \left(v\int_V F\; \Omega \dd r\right)_x \dd s
    \right|_{H^1} &\leq& \kappa (1+M)^2
    \left|\int_0^t C (t-s)^{-1/2} \dd s \right|\\
    &=& 2 \kappa C (1+M)^2 \sqrt{t}.
\end{eqnarray*}
Then from~\eqref{iteration} and the choice of $M$ we find that
\[
    |u|_{H^1} \leq \frac{M}{2} +2 \kappa \alpha C(1+M)^2 \sqrt{t},
\]
and
\[
    \frac{M}{2} + 2 \kappa \alpha C (1+M)^2 \sqrt{t} < M
\]
for all
\[
    t< M^2 (4\kappa \alpha C (1+M)^2 )^{-2}.
\]

\item Now we show that $Q$ is a contraction on $W$ for small enough time.
Given $v_1, v_2\in W$, let $u_1= Qv_1$ and $u_2=Q v_2$ and we abbreviate
$F_1=F(v_1(x+r,t),x,r)$ and $F_2=F(v_2(x+r,t),x,r)$.  We estimate for the $H^1$-norm:

\begin{eqnarray*}
|u_1-u_2|_{H^1}
&\leq & \alpha\left|\int_0^t S(t-s)
    \left[ \left( v_1 \int_V F_1\; \Omega \dd r \right)_x -
    \left( v_2 \int_V F_2 \; \Omega \dd r \right)_x\right] \dd s\right|_{H^1} \\
    &\leq & \alpha\left| \int_0^t S(t-s) \left((v_1-v_2) \int_V F_1\; \Omega \dd r \right)_x \dd s\right|_{H^1} \\
    && + \alpha \left| \int_0^t S(t-s) \left( v_2\int_V (F_1-F_2) \Omega \dd r \right)_x \dd s \right|_{H^1}\\
    &\leq& \alpha \left| \int_0^t S(t-s)  (v_1-v_2)_x \int_V F_1 \Omega \dd r  \dd s \right|_{H^1}\\
    && + \alpha \left| \int_0^t S(t-s) (v_1-v_2)\frac{\dd}{\dd x}\int_V F_1 \Omega \dd r  \dd s \right|_{H^1}\\
    && + \alpha \left| \int_0^t S(t-s)  v_{2,x}\int_V (F_1-F_2) \Omega \dd r  \dd s \right|_{H^1}\\
    && +\alpha \left| \int_0^t S(t-s) v_2\int_V (F_{1u}-F_{2u}) v_{1x} \Omega \dd r \dd s\right|_{H^1}\\
    && + \alpha \left| \int_0^t S(t-s) v_2 \int_V F_{2u} (v_{1x}-v_{2x}) \dd r \dd s\right|_{H^1}\\
    &=:& I_1 + I_2 + I_3 + I_4 + I_5.
    \end{eqnarray*}
We use the previous bounds of $|v_1|_\cX, |v_2|_\cX\leq M$ and \eqref{oneplusM} to study each term separately. We also use the regularization of the heat equation semigroup \eqref{regularity} for all terms $I_k$. We obtain
\begin{eqnarray*}
I_1 &\leq & \alpha \sqrt{t} (1+M) |v_1-v_2|_\cX\\
I_2 &\leq & \alpha \sqrt{t} (1+M) k_5 |v_1-v_2|_\cX\\
I_3 &\leq & \alpha \sqrt{t} M k_2 |v_1-v_2|_\cX\\
I_4 &\leq & \alpha \sqrt{t} M^2 k_3 |v_1-v_2|_\cX\\
I_5 &\leq & \alpha \sqrt{t} M k_2 |v_1-v_2|_{\cX}.
\end{eqnarray*}
Which means that there is a constant $C>0$ such that
\[ |u_1-u_2|_{H^1} \leq C \sqrt{t} |v_1-v_2|_\cX.\]
Note that since in one-dimension we have that $H^1 \subset L^\infty$ we automatically
have the same estimate for the supremum norm. Together we find a constant $C>0$ such that
\[ |u_1-u_2|_\cX \leq C \sqrt{t}  |v_1-v_2|_\cX,\]
which is a contraction for $t$ small enough.

\item The map $Q$ is a continuous contraction on $B_M(0)$ for small enough times, hence there exists a unique short-time mild solution of (\ref{mildsolution}).

\end{enumerate}
\end{proof}

\section{Global Existence}\label{sec:global_existence}

\begin{lemma} Assume~\ref{Assumption:1}--\ref{Assumption:6} and let $u(x,t)$ denote the unique,
non-negative, mild solution of
\eqref{mainmodel} from Theorem~\ref{t:existence}. Then there is a constant $c_1 > 0$ such that
\begin{equation}\label{L2expo}
    |u(\cdot,t)|_2 \leq |u_0|_2 e^{c_1 t},
\end{equation}
for as long as the solution exists.
\end{lemma}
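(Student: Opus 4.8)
The plan is to run a standard $L^2$ energy estimate and close it with Gronwall's inequality. First I would multiply the evolution equation in~\eqref{mainmodel} by $u$ and integrate over $[0,L]$, giving $\tfrac{1}{2}\tfrac{\dd}{\dd t}|u(\cdot,t)|_2^2 = D\int_0^L u\,u_{xx}\dd x - \alpha\int_0^L u\,(u\K[u])_x\dd x$. For the diffusive term I integrate by parts once and use the no-flux condition $u_x(0,t)=u_x(L,t)=0$ to discard the boundary contribution, leaving the dissipative term $-D|u_x|_2^2$. For the adhesive term I integrate by parts again; the boundary term $[u^2\,\K[u]]_0^L$ vanishes because the model enforces condition~\eqref{nofluxK}, i.e.\ $\K[u](0)=\K[u](L)=0$, so I am left with $\alpha\int_0^L u_x\,u\,\K[u]\dd x$.

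Next I would bound $\K[u]$ in the supremum norm. By the linear growth bound in~\ref{Assumption:4} together with the normalisation $\int_V|\Omega(r)|\dd r = 1$ (which follows from~\ref{Assumption:2}--\ref{Assumption:3}), one has $|\K[u](\cdot,t)|_\infty \le k_1\,(1+|u(\cdot,t)|_\infty)$. Applying Cauchy--Schwarz and then Young's inequality to the remaining term gives $\alpha\bigl|\int_0^L u_x\,u\,\K[u]\dd x\bigr| \le \alpha k_1(1+|u|_\infty)|u_x|_2|u|_2 \le D|u_x|_2^2 + \tfrac{\alpha^2 k_1^2}{4D}(1+|u|_\infty)^2|u|_2^2$, so that the gradient term is absorbed exactly by the dissipation produced in the first step. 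Combining the two estimates yields the differential inequality $\tfrac{\dd}{\dd t}|u|_2^2 \le \tfrac{\alpha^2 k_1^2}{2D}(1+|u|_\infty)^2|u|_2^2$.

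To conclude I would invoke that the solution lives in $\cX = \C^0([0,T],{\cal Y}\cap L^\infty(0,L))$, so that on the existence interval $M_\infty := \sup_{s\le t}|u(\cdot,s)|_\infty$ is finite; setting $c_1 = \tfrac{\alpha^2 k_1^2}{4D}(1+M_\infty)^2$ and applying Gronwall's inequality to $|u|_2^2$ gives $|u(\cdot,t)|_2 \le |u_0|_2\,e^{c_1 t}$, as claimed. The main obstacle is precisely this dependence on $|u|_\infty$: the nonlocal factor $\K[u]$ is controlled only by the supremum norm of $u$, so the estimate does not close on $L^2$ alone and must borrow the $L^\infty$ bound carried by membership in $\cX$. (Nonnegativity of $u$ plays no role in this particular estimate; it is simply inherited from the biological setting.) For a genuinely uniform constant $c_1$ valid up to the maximal existence time, one would additionally need an a priori $L^\infty$ bound independent of $T$, which I expect is supplied separately in the global existence argument that follows.
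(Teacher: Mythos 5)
Your proof follows the same skeleton as the paper's (multiply by $u$, integrate by parts using the boundary conditions, Young's inequality against the dissipation, Gronwall), but the way you close the estimate has a genuine gap. You bound $\K[u]$ by $k_1(1+|u|_\infty)$ and then set $c_1 = \tfrac{\alpha^2 k_1^2}{4D}(1+M_\infty)^2$ with $M_\infty = \sup_{s\le t}|u(\cdot,s)|_\infty$. This $M_\infty$ depends on the solution and on $t$; it is finite on each compact subinterval of existence, but it is not a priori bounded up to the maximal existence time --- ruling out precisely this kind of blow-up is what the lemma is for. So what you prove is not the statement ``there is a constant $c_1>0$ such that $|u(\cdot,t)|_2 \le |u_0|_2 e^{c_1 t}$ for as long as the solution exists'': your $c_1$ may degenerate as $t$ approaches the maximal time. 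Worse, your suggested repair --- that an a priori $L^\infty$ bound ``is supplied separately in the global existence argument'' --- is circular: in the paper, the global existence proof (\cref{Thm:GlobalExistence}) uses this very lemma to control the coefficient $A(t) = C\bigl(1+e^{2c_1 t}|u_0|_2^2\bigr)$ in the $H^1$ estimate, and $L^\infty$ control is obtained only afterwards via the embedding $H^1 \subset L^\infty$.

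The paper avoids $|u|_\infty$ entirely by exploiting the two structural facts you set aside: non-negativity and mass conservation. In the paper's estimate one bounds, pointwise in $x$,
\[
    \left|\K[u](x,t)\right| \le \int_V k_1\bigl(1+|u(x+r,t)|\bigr)\,|\Omega(r)| \dd r
    \le k_1 |\Omega|_\infty \bigl(2R + |u(\cdot,t)|_1\bigr),
\]
pulling out $|\Omega|_\infty$ (finite by~\ref{Assumption:3}) rather than $|\Omega|_1$, and then uses that $|u(\cdot,t)|_1 = \int u \dd x = m_0$ because $u \ge 0$ and the no-flux conditions conserve mass (this is also built into the space $\mathcal{Y}$). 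Hence $c_1$ depends only on $\alpha$, $D$, $k_1$, $R$, $|\Omega|_\infty$ and $m_0$ --- fixed data --- and the bound genuinely holds for as long as the solution exists. Note that this is the opposite of your parenthetical claim: non-negativity is exactly what makes the estimate close, since $|u|_1$ coincides with the conserved mass only for non-negative $u$. The fix to your argument is local: in the step where you estimate $\K[u]$, trade the factor $|u|_\infty$ for $|u|_1 = m_0$ as above, and the rest of your computation goes through with a constant that is truly uniform.
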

\begin{proof}
We multiply \eqref{mainmodel} by $u$ and integrate:
\begin{eqnarray*}
\frac{\dd}{\dd t}\int \frac{u^2}{2} \dd x &=& -D \int u_x^2 \dd x + \alpha\int u_x \left(u \int_V F(u,x,r) \Omega(r) \dd r\right) \dd x \\
&\leq & - D \int u_x^2 +\frac{\alpha\ep}{2} \int u_x^2 \dd x + \frac{\alpha}{2\ep} \int\left(u \int_V F(u,x,r) \Omega(r) \dd r\right)^2 \dd x\\
&\leq & \left(-D + \frac{\alpha\ep}{2}\right) \int u_x^2 \dd x +
    \frac{\alpha}{2\ep} \int\left[u\int_V k_1 (1+|u|) \Omega(r) \dd r \right]^2 \dd x \\
&\leq &   \left(-D + \frac{\alpha\ep}{2}\right) \int u_x^2 \dd x +
    \frac{\alpha}{2\ep} \int u^2 \Bigl[k_1 (2R + |u|_{1}) |\Omega|_\infty\Bigr]^2 \dd x \\
&\leq &  \left(-D + \frac{\alpha\ep}{2}\right) \int u_x^2 \dd x + \left[\frac{\alpha}{2\ep}(2 R + m_0) |\Omega|_\infty\right]^2 \int u^2 \dd x.\\
\end{eqnarray*}
We choose $\ep=2 D/\alpha$, such that the first term cancels and we obtain
\[
    \frac{\dd}{\dd t}\int\frac{u^2}{2} \dd x \leq \left[\frac{k_1 \alpha^2}{4D} (2 R + m_0) |\Omega|_\infty \right]^2 \int u^2 \dd x.
\]
Hence there is a constant $c_1>0$ such that~\eqref{L2expo} is satisfied.
\end{proof}

\begin{theorem}\label{Thm:GlobalExistence}
Assume~\ref{Assumption:1}--\ref{Assumption:6}
and let $u(x,t)$ denote the unique, non-negative, mild solution of \eqref{mainmodel} from
Theorem~\ref{t:existence}. Then the solution exists globally in time and there are
constants $c_2, c_3>0$ such that
\begin{equation}\label{H1expo}
    ||u(\cdot,t)|| \leq c_2(||u_0|| + t) e^{c_3 t}.
\end{equation}
\end{theorem}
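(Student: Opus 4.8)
The plan is to combine a continuation principle with an a priori $H^1$ estimate. From the proof of Theorem~\ref{t:existence}, the length of the local existence interval depends only on $M>2\max\{|u_0|_{H^1},|u_0|_\infty\}$ (through the threshold $t<M^2(4\kappa\alpha C(1+M)^2)^{-2}$ in Step~1). Consequently the mild solution can be restarted and continued as long as $|u(\cdot,t)|_{H^1}$ stays finite: if $[0,T_{\max})$ is the maximal interval of existence and $T_{\max}<\infty$, then necessarily $\limsup_{t\to T_{\max}^-}|u(\cdot,t)|_{H^1}=\infty$. It therefore suffices to produce, on every finite interval, an a priori bound on $\|u(\cdot,t)\|=|u_x(\cdot,t)|_2$. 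The preceding lemma already supplies the $L^2$ control $|u(\cdot,t)|_2\le|u_0|_2 e^{c_1 t}$, and the task is to bootstrap this into the $H^1$ bound~\eqref{H1expo}.

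For the a priori estimate I would test the equation with $-u_{xx}$. Since the smoothing property~\eqref{regularity} places the mild solution in $H^2$ for $t>0$, and the no-flux condition $u_x(0,t)=u_x(L,t)=0$ kills the boundary terms, integration by parts yields
\[
    \tfrac12\tfrac{\dd}{\dd t}|u_x|_2^2 = -D|u_{xx}|_2^2 + \alpha\int_0^L (u\K[u])_x\,u_{xx}\dd x .
\]
Writing $(u\K[u])_x = u_x\K[u] + u(\K[u])_x$ and applying Young's inequality, I would absorb a factor $\tfrac{D}{2}|u_{xx}|_2^2$ into the dissipation, leaving $\tfrac{\alpha^2}{2D}|(u\K[u])_x|_2^2$ to control. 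Its two pieces are bounded by $|\K[u]|_\infty\,|u_x|_2$ and $|u|_\infty\,|(\K[u])_x|_2$, where assumption~\ref{Assumption:4} gives $|\K[u]|_\infty\le k_1(1+|\Omega|_\infty|u|_1)$ and assumption~\ref{Assumption:6} gives $|(\K[u])_x|_2\le k_4(1+|u|_\infty)$.

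The crux of the argument, and the step I expect to be the main obstacle, is that the term $|u|_\infty|(\K[u])_x|_2\lesssim|u|_\infty(1+|u|_\infty)$ is quadratic in $|u|_\infty$, hence a priori quartic in $|u|_{H^1}$, which is far too strong to close a Gronwall estimate. The resolution is to spend the already-established $L^2$ bound through the one-dimensional Gagliardo--Nirenberg/Agmon inequality
\[
    |u|_\infty^2 \le C\,|u|_2\bigl(|u|_2 + |u_x|_2\bigr),
\]
so that, after inserting $|u|_2\le|u_0|_2 e^{c_1 t}$, the quantity $|u|_\infty^2$ becomes \emph{linear} in $|u_x|_2$ with a finite, time-dependent coefficient; likewise $|\K[u]|_\infty$ is controlled by $|u|_1\le\sqrt{L}\,|u|_2$ and is bounded on finite time intervals. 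Collecting these estimates gives $|(u\K[u])_x|_2\le C(t)\bigl(1+|u_x|_2\bigr)$ with $C(t)$ growing only exponentially, and therefore a differential inequality of the form $\tfrac{\dd}{\dd t}|u_x|_2^2\le C(t)\bigl(1+|u_x|_2^2\bigr)$.

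Finally I would integrate this inequality with Gronwall's lemma. Because $C(t)$ is bounded on every $[0,T]$, the $H^1$ norm stays finite for all finite times, and arranging the constants produces the explicit estimate $\|u(\cdot,t)\|\le c_2(\|u_0\|+t)e^{c_3 t}$ of~\eqref{H1expo}. By the continuation principle of the first paragraph this rules out finite-time blow-up, so $T_{\max}=\infty$ and the solution is global. One may alternatively avoid invoking $H^2$-regularity by applying the smoothing estimate~\eqref{regularity} directly to the Duhamel formula~\eqref{mildsolution}, obtaining the weakly singular Volterra inequality $\|u(t)\|\le\|u_0\|+\alpha C\int_0^t (t-s)^{-1/2}C(s)\bigl(1+\|u(s)\|\bigr)\dd s$ and closing it with a singular Gronwall lemma; the same interpolation step remains the decisive ingredient.
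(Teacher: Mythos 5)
Your proposal is correct, and its skeleton coincides with the paper's proof: test the equation with $u_{xx}$, absorb the dissipative term via Young's inequality, split $(u\K[u])_x = u_x\K[u] + u(\K[u])_x$, feed in the exponential $L^2$ bound of the preceding lemma, and close with Gr\"onwall. Where you differ is precisely in the two places the paper is least explicit, and in both your version is the more careful one. For the dangerous term $u\,(\K[u])_x$ the paper estimates $\int u^2 \bigl( \tfrac{\dd}{\dd x}\int_V F\,\Omega \dd r \bigr)^2 \dd x \le C(1+\|u\|^2)\int u^2 \dd x$, which implicitly uses a \emph{pointwise} bound $\bigl|(\K[u])_x\bigr| \le C(1+|u|_{H^1})$; this does hold for the examples~\eqref{Fexamples} (it is what the computation in the lemma verifying~\ref{Assumption:6} actually produces), but it is not literally what~\ref{Assumption:6} states, which is only an $L^2$ bound. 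You instead take~\ref{Assumption:6} at face value, accept the resulting quadratic power of $|u|_\infty$, and then linearize it in $|u_x|_2$ through Agmon's inequality $|u|_\infty^2 \le C|u|_2(|u|_2+|u_x|_2)$, with the $L^2$ lemma controlling the coefficient. Both devices serve the identical purpose --- keeping the Gr\"onwall inequality linear in $\|u\|^2$ with an exponentially growing coefficient $A(t)$ --- so this is a variant of the same idea rather than a different method, but yours matches the stated hypotheses more faithfully. Likewise, your explicit continuation principle (the local existence time in Theorem~\ref{t:existence} depends only on $M$, hence only on the $H^1\cap L^\infty$ norm at the restart time, so blow-up of the $H^1$ norm is the only obstruction to globality) and your remark on justifying the $H^2$-level computation, or bypassing it with a singular Gr\"onwall argument applied to the Duhamel formula~\eqref{mildsolution}, fill in steps the paper compresses into the single closing sentence that the norms ``do not grow faster than exponential, hence the solutions are global.''
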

\begin{proof}
We multiply (\ref{mainmodel}) by $u_{xx}$ and integrate:
\begin{eqnarray}
\frac{\dd}{\dd t}\int \frac{u_x^2}{2} \dd x &=& \int u_x u_{xt} \dd x = -\int u_{xx} u_t \dd x\nonumber\\
&=& -D \int u_{xx}^2 \dd x + \alpha\int u_{xx} \left(u \int_V F(u,x,r) \Omega(r) \dd r\right)_x \dd x \nonumber \\
&\leq & \left(-D +\frac{\alpha\ep}{2}\right) \int u_{xx}^2 \dd x +
    \frac{\alpha}{2\ep} \int\left[\left(u \int_V F(u,x,r) \Omega(r) \dd
    r\right)_x\right]^2 \dd x.\label{firstterm}
\end{eqnarray}

We continue with the second term
\begin{eqnarray*}
    && \frac{\alpha}{2\ep} \int\left[\left(u \int_V F(u,x,r) \Omega(r) \dd
    r\right)_x\right]^2 \dd x\\
&\leq&
\frac{\alpha}{2\ep}\int u_x^2 \left(\int_V F\; \Omega \dd r\right)^2 \dd x  + \frac{\alpha}{\ep} \int\left( u u_x \int_V F\; \Omega \dd r \frac{\dd}{\dd x}\int_V F \; \Omega \dd r \right) \dd x\\
&& + \frac{\alpha}{2\ep} \int u^2 \left(\frac{\dd}{\dd x} \int_V F\; \Omega \dd r\right)^2 \dd x \\
&\leq& \left(\frac{\alpha}{2\ep}+ \frac{\alpha}{2\ep}\right) \int u_x^2 \left(\int_V F\; \Omega \dd r \right)^2 \dd x  +  \left( \frac{\alpha}{2\ep} + \frac{\alpha}{2\ep} \right) \int u^2 \left(\frac{\dd}{\dd x} \int_V F\;\Omega \dd r \right)^2 \dd x\\
&\leq & C \Bigl(1+|u|_2^2\Bigr) \int u_x^2 \dd x + C \Bigl(1+||u||^2\Bigr)\int u^2 \dd x\\
&\leq & C \Bigl(1+|u|_2^2\Bigr) \Bigl( 1+||u||^2\Bigr)\\
&\leq & C \Bigl(1 + e^{2c_1 t} |u_0|_2^2\Bigr) \Bigl(1+||u||^2\Bigr).
\end{eqnarray*}
Now we choose $\ep = \frac{2D}{\alpha}$ such that the first term in (\ref{firstterm}) vanishes and we obtain
\[
    \frac{\dd}{\dd t} ||u||^2 \leq A(t) + A(t) ||u||^2,
\]
with exponentially growing coefficient function
\[
    A(t) \coloneqq C\Bigl(1+ e^{2c_1 t} |u_0|_2^2\Bigr).
\]
Hence, by Gr\"{o}nwall's Lemma,
we find that
\[
    ||u(\cdot,t)||^2 \leq \Lambda(t) ||u_0||^2 +\int_0^t \Lambda(t-s) A(s) \dd s,
    \quad \Lambda(t) = \exp\left(\int_0^t A(s) \dd s\right).
\]
Integrating $A(s)$ we find constants $c_2, c_3 >0$ such that
\[
    ||u(\cdot,t)|| \leq c_2(||u_0|| + t) e^{c_3 t}.
\]
The $H^1/\mathbb{R}$-norm, and consequently also the $L^\infty$-norm, do not
grow faster than exponential, hence the solutions are global.
\end{proof}

\section{Numerical solutions}\label{sec:numerics}

In this section we solve equation~\eqref{eq:K} numerically, for different types
of boundary conditions listed in Table~\ref{tab:examples}. We show several examples of adhesive, repulsive and neutral boundary conditions.

\subsection{Numerical methods}
Equation~\eqref{eq:K} is solved using a method of lines approach, where the
spatial derivatives are discretized to yield a large system of time-dependent
ODEs (MOL-ODEs). Towards this goal, the domain $[0, L]$ is discretized into a
cell-centered grid with uniform length $h = 1/N$, where $N$ is the number of
grid cells per unit length. We denote the cell centers as $x_i$, where $1 \leq i
\leq N_1$ (the total number of grid points). The
discretization of the advection term utilities a high-order upwinding scheme
augmented with a flux-limiting scheme to ensure positivity of solutions. For
full details on the numerical method we refer to \cite{Gerisch2001}.

A fast numerical scheme for the non-local term $\K[u]$ is a
challenge.  In the periodic case the non-local term $\K[u]$ can be efficiently
implemented using the Fast Fourier transform (FFT) \cite{Gerisch2010a}. For each
halfway point between grid points, \cite{Gerisch2010a} proposed the approximation
\[
    a_i \coloneqq \frac{1}{R} \int_{-R}^{R} \hat{g}(x_i + h/2 + r) \dd r
        = \sum_{l = 1}^{N_l} v_{i-l+1} H_l \qquad i = 1, \ldots, N_1,
\]
where $H_l$ are the weights of a piece-wise linear reconstruction of $H(x)$, and
where the coefficients $v_i$ are defined by
\[
    v_i = \frac{1}{R} \int_{-R}^{R} \Phi\lb i + \frac{r}{h}\rb \Omega(r) \dd r,
\]
where $\Phi(\cdot)$ is a piece-wise linear function.
The coefficients $v_i$ can be precomputed at the beginning of a numerical
simulation.  This means that the computation of $a_i$ can be summarized as a
matrix-vector product $\vec{a} = V \vec{H}$ for a matrix $V = (v_{il}) \in
\R^{N_1\times N_1}$. The use of the FFT to accelerate this
matrix vector product~\cite{Gerisch2010a} is well known.

However, in our case the integration limits in $\K[u]$ are spatially dependent
near the domain boundary. Thus, near the boundary the FFT can no longer be employed.
We retain the speed advantage the FFT offers, by continuing to use it far away
from the boundary (at least one sensing radius). Near the boundary we compute the
integration weights $v_i$ for
every point and use a matrix-vector product to compute the non-local term in the
boundary region. The integration weights near the boundary are given by
\[
    v_i = \frac{1}{R} \int_{f_1(x_i)}^{f_2(x_i)} \Phi\lb i + \frac{r}{h}\rb \Omega(r) \dd r.
\]

The MOL-ODEs are integrated using the ROWMAP stiff system integration~\cite{Weiner1997a}
(we use the implementation by the authors\footnote{\url{http://www.mathematik.uni-halle.de/wissenschaftliches_rechnen/forschung/software/}}).
This integrator is commonly used to integrate the possibly stiff MOL-ODEs obtained
by discretizing PDEs~\cite{Gerisch2001,Gerisch2008,Painter2009a,Hillen2013c}.

\begin{table}[!ht]
    \centering
    \begin{tabularx}{0.75\textwidth}{@{} >{}Y p{0.3\textwidth} @{}}
        \toprule
        Model Parameter & Value \\
        \midrule
        Domain Size $L$ & 5.0 \\
        Domain subdivisions per unit length & 128 \\
        Diffusion coefficient $D$ & 1.0  \\
        Adhesion strength coefficient $\alpha$ & varies \\
        Sensing radius $R$ & 1.0 \\
        Initial conditions (IC) & $1 + \xi$, $\xi \sim \mathcal{N}(0, 1)$ \\
        Method error tolerance $v_{tol}$ & $10^{-5}$ \\
        Final simulation time $t_f$ & $25$ \\
        \bottomrule
    \end{tabularx}
    \caption{Common parameters for the numerical solutions.}\label{Tab:NumericalParameters}
\end{table}

\subsection{Solutions on a periodic domain}
As a control case, we show typical solutions of equation~\eqref{Armstrong1} on a
periodic domain first. In this case we use the sensing domain $E_1(x)$ (see
\cref{Example:Periodic}). An extensive bifurcation analysis of the periodic case is given in \cite{AMS-memoires} and we know
that the first three bifurcation points from the homogeneous solution are located at
\[
    \alpha_1 = \frac{16\pi^2}{25(5 - \sqrt{5})}, \qquad
    \alpha_2 = \frac{64\pi^2}{25(5 + \sqrt{5})}, \quad
    \alpha_3 = \frac{144\pi^2}{25(5 + \sqrt{5})}.
\]
This roughly means that $\alpha_1 \sim 2.28$, $\alpha_2 \sim 3.49$,
$\alpha_3 \sim 7.85$. For all subsequent numerical simulations we pick a value
of $\alpha$ from each of the intervals $(0, \alpha_1)$, $(\alpha_1, \alpha_2)$,
and $(\alpha_2, \alpha_3)$. The numerical solutions of equation~\eqref{Armstrong1}
with periodic boundary conditions are shown in \cref{Fig:PeriodicNumericalResults}.
We identify three important features in these solutions. Firstly, for values of $\alpha$
below the first bifurcation point, the solution is constant. Secondly, as predicted by
the bifurcation analysis in~\cite{AMS-memoires}, a single peak forms between the first
and second bifurcation point. Finally, we note that due to the translational
symmetry permitted by the periodic boundary conditions, the solution peak may
form at any location within the domain. Higher bifurcation points lead to a
larger number of aggregations in the domain. The local maxima have a uniform
distance and they can arise anywhere in the domain due to rotational symmetry
\cite{AMS-memoires}.

\begin{figure}[!ht]
    \includegraphics[width=0.95\textwidth]{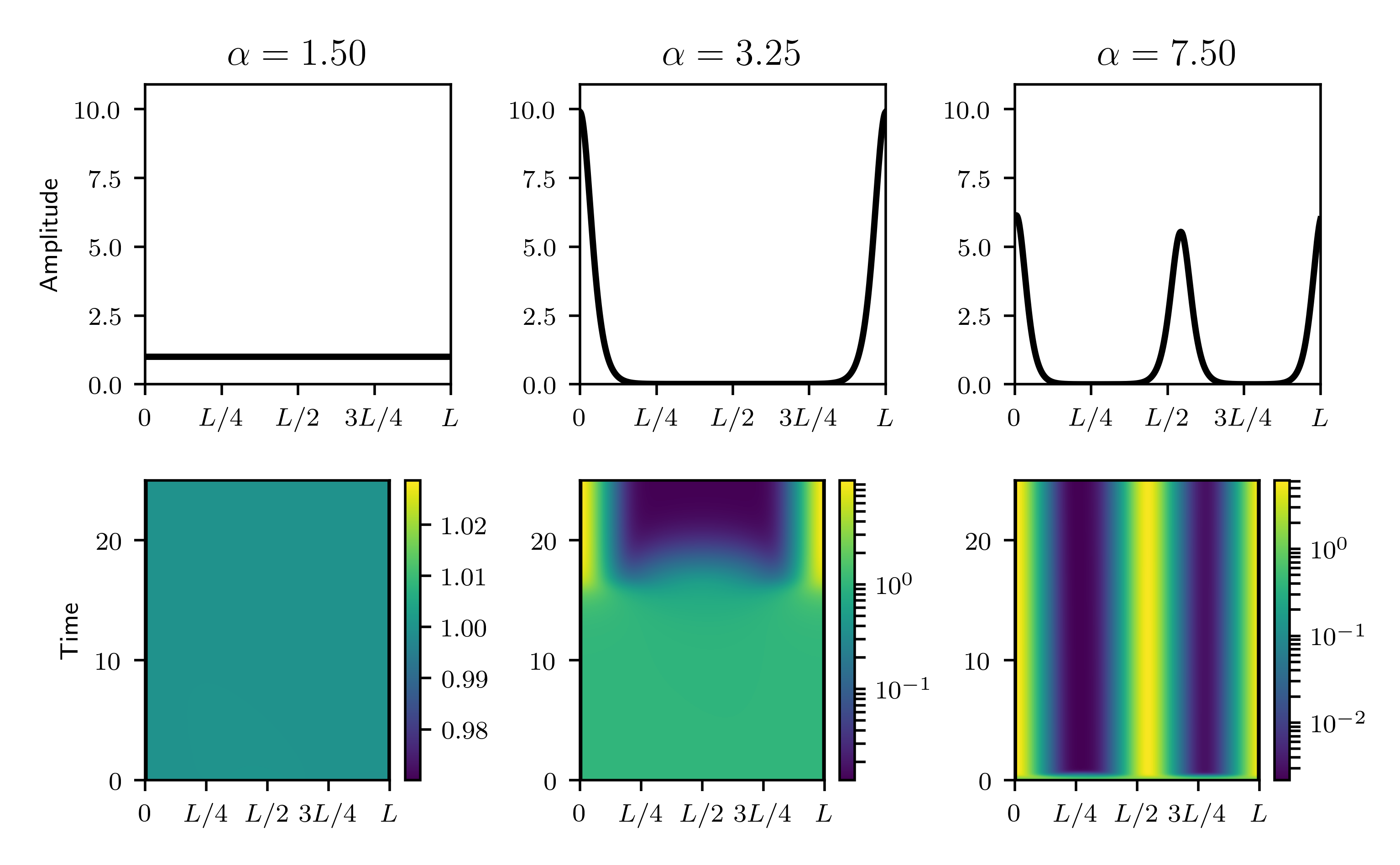}
    \caption{Numerical solutions of equation~\eqref{Armstrong1} subject to periodic boundary
    conditions (see \cref{Example:Periodic}). In the top row we show the final solution
    profiles, while below are the kymographs.
    (Left) $\alpha = 1.5$, (Middle) $\alpha = 3.25$, (Right) $\alpha = 7.5$.
    }\label{Fig:PeriodicNumericalResults}
\end{figure}

\subsection{Solutions with No-Flux boundary conditions}

We compute numerical solutions for equation~\eqref{Armstrong1} with the no-flux  sensing
domain $E_3(x)$ (see \cref{Example:NoFlux}). The numerical solutions are shown
in \cref{Fig:NoFluxNumericalResults}. Comparing these no-flux solutions to the periodic
solution in \cref{Fig:PeriodicNumericalResults} we identified three differences.
First, for $0 < \alpha < \alpha_1$ the solution is no longer constant. In fact the
constant solution is now only a solution for $\alpha = 0$. In particular,
we note that the solution decreases near the boundary, indicating that the boundary
is repulsive. The repulsive nature of the boundaries appears to speed up
peak formation in the no-flux case, compared to the periodic case.
Second for $\alpha > \alpha_1$ the final no-flux solution profiles
correspond to  those in the periodic case. Since the no-flux boundary conditions
break the translational symmetry observed in the periodic case, the locations of the
peaks are fixed in the no-flux case. Finally, we note that while the bifurcation
analysis carried out in~\cite{AMS-memoires} cannot be straightforwardly extended
to the no-flux situation, the numerical results suggest that the bifurcation
points are similar.

\begin{figure}[!ht]
    \includegraphics[width=0.95\textwidth]{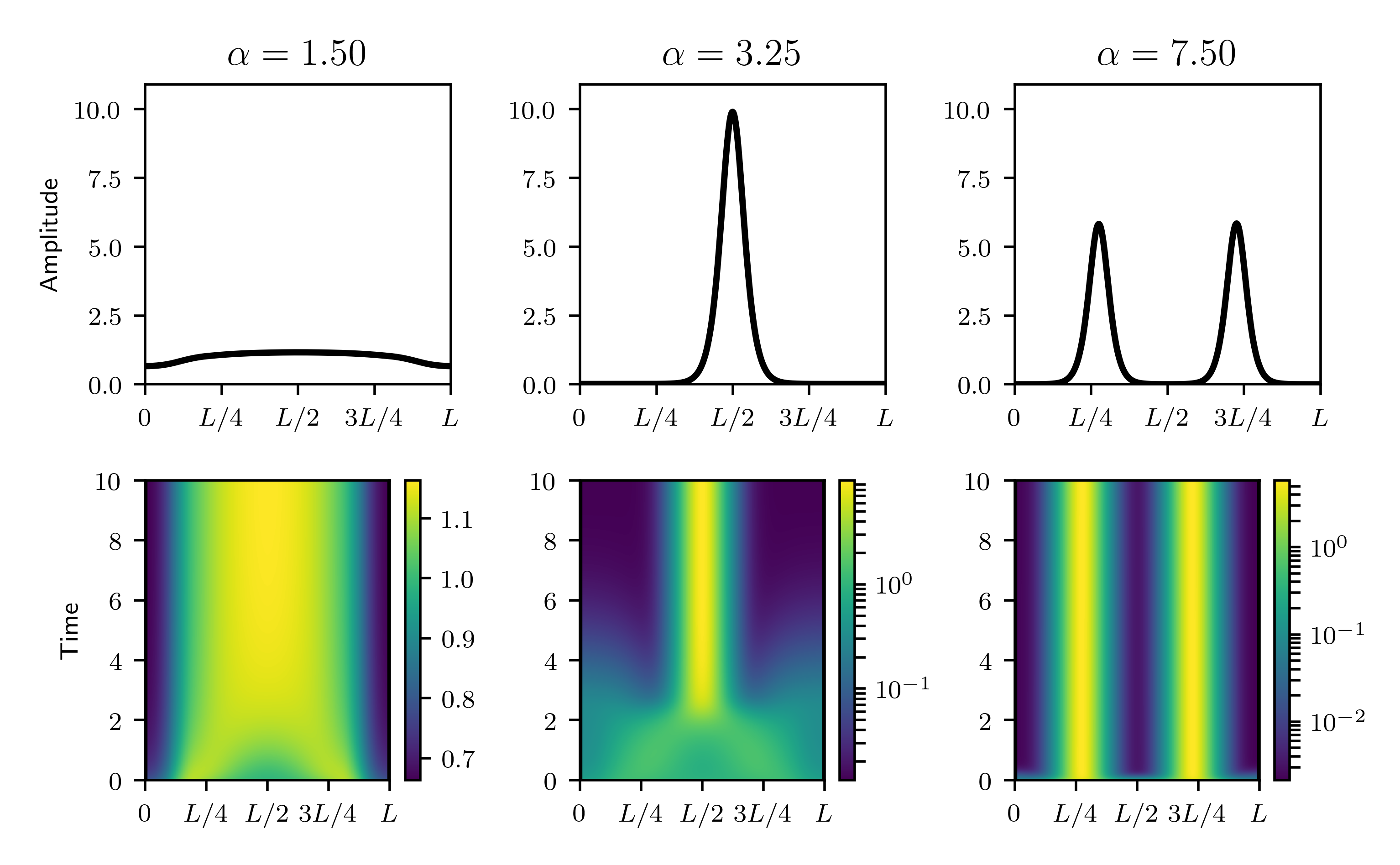}
    \caption{Numerical solutions of equation~\eqref{Armstrong1} subject to no-flux boundary
    conditions with $E_3(x)$ (see \cref{Example:NoFlux}).
    In the top row we show the final solution
    profiles, while below are the kymographs.
    (Left) $\alpha = 1.5$, (Middle) $\alpha = 3.25$, (Right) $\alpha = 7.5$.
    }\label{Fig:NoFluxNumericalResults}
\end{figure}

%
%
%
%
\subsection{Solutions of the adhesive and repulsive boundaries}

In this section we demonstrate numerical solutions of with the so called weakly
adhesive boundary conditions i.e.\ sensing domain $E_4(x)$ (see
\cref{Example:WeaklyAdhesive}). In particular, we consider the situation in which
the constructed $\K[u]$ does not satisfy condition~\eqref{nofluxK} i.e.\ the dependent
case. Since in this case the adhesive and diffusive fluxes depend on each other,
the existence of solutions in this case are not included in the theoretical results
in this paper. However, since we can compute solutions in this case numerically,
we explore their possible solutions numerically. As before we distinguish between
two types of boundaries (1) attractive boundaries $\beta > 0$ and
(2) repulsive boundaries $\beta < 0$. The numerical solutions are shown in
\cref{Fig:WeaklyAdhesiveAttractivesSoln} and \cref{Fig:WeaklyAdhesiveRepusliveSoln}
respectively.

When the adhesive strength is weak, $\alpha < \alpha_1$, we note that the solution either accumulates (adhesive boundary)
or is repelled from the boundary (repulsive boundary), while far away from the boundaries the
solutions are constant. These solutions are reminiscent of the menisci which form at a
liquid solid interface (e.g.\ water-glass or mercury-glass). It is well known that the
meniscus is concave whenever the liquid-solid adhesion is stronger than liquid-liquid
cohesion, while it is convex whenever liquid-liquid cohesion is weaker than liquid-solid
adhesion.

For stronger adhesive strength,  $\alpha > \alpha_1$, we once again observe the formation of peaks with fixed locations.
In the case with adhesive boundary conditions we always find two half peaks on the boundary,
while in the repulsive boundary case both peaks form in the domain's interior. Once again
the periodic bifurcation analysis appears to be a good predictor of the bifurcation
points with different boundary conditions.

\begin{figure}[!ht]
    \includegraphics[width=0.95\textwidth]{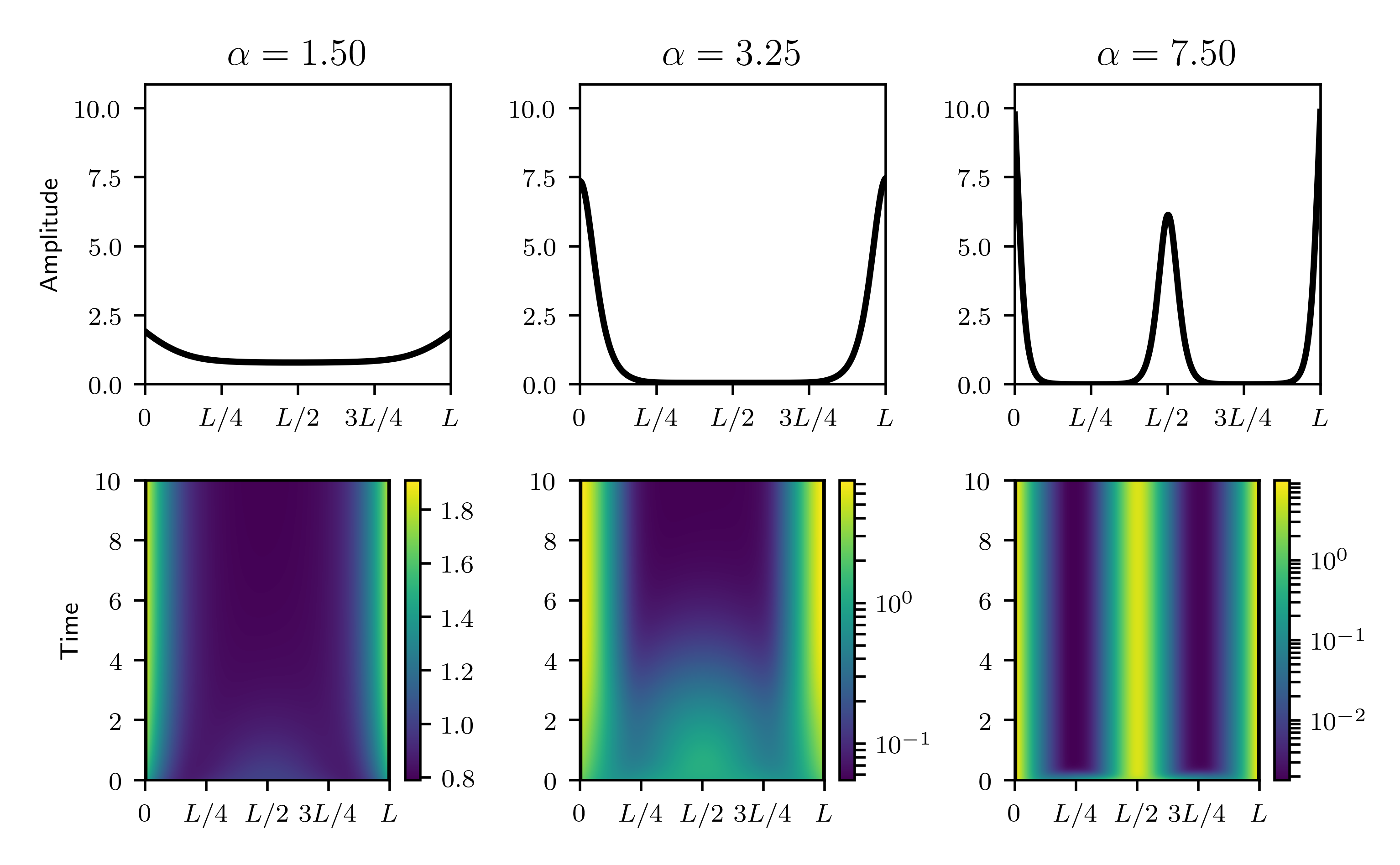}
    \caption{Numerical solutions of equation~\eqref{Armstrong1} subject to
    adhesive boundary conditions (see \cref{Example:WeaklyAdhesive}) with $\beta = 2$.
    In the top row we show the final solution
    profiles, while below are the kymographs.
    (Left) $\alpha = 1.5$, (Middle) $\alpha = 3.25$, (Right) $\alpha = 7.5$.
    }\label{Fig:WeaklyAdhesiveAttractivesSoln}
\end{figure}

\begin{figure}[!ht]
    \includegraphics[width=0.95\textwidth]{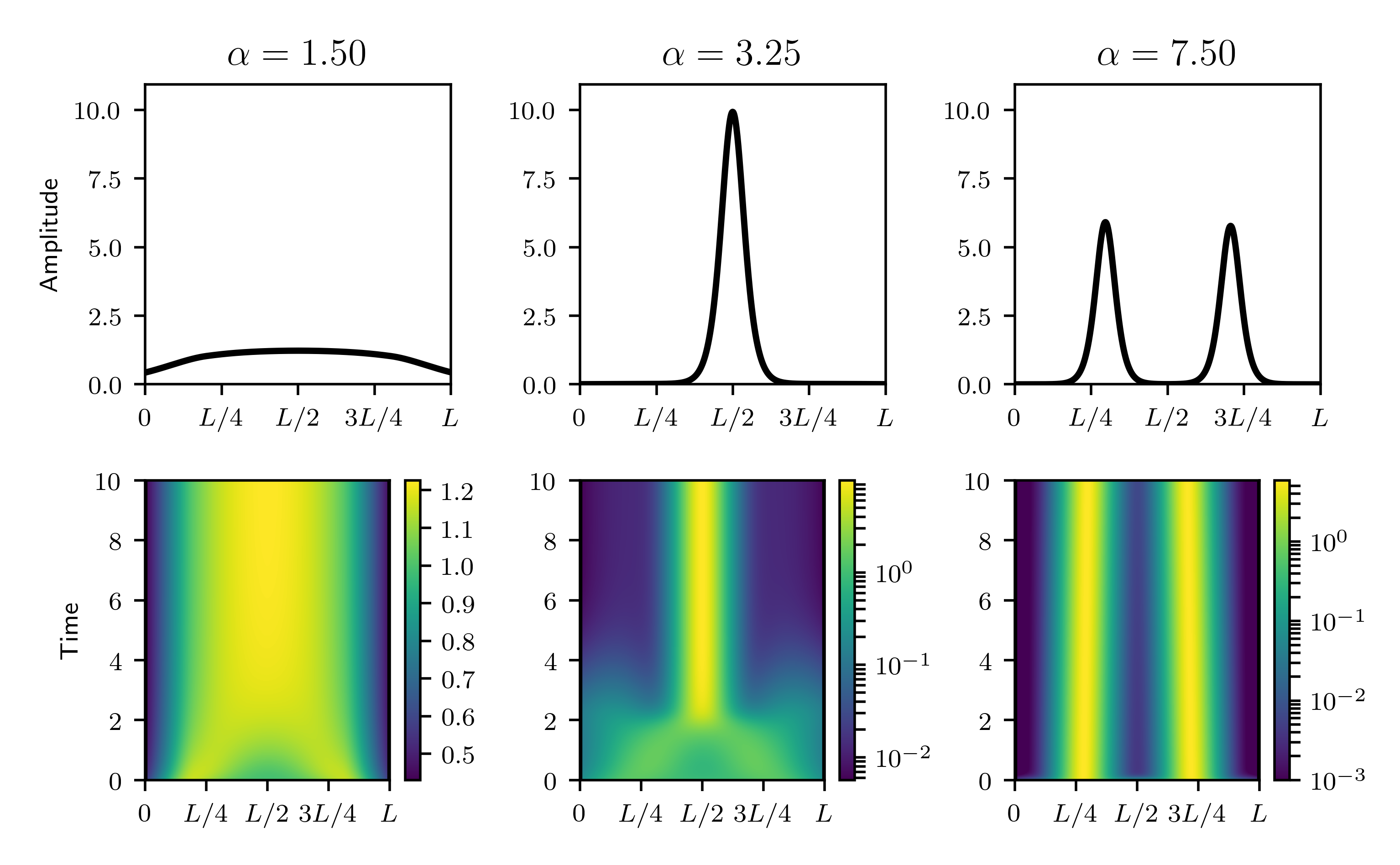}
    \caption{
    Numerical solutions of equation~\eqref{Armstrong1} subject to
    repulsive boundary conditions (see \cref{Example:WeaklyAdhesive}) with $\beta = -1$.
    In the top row we show the final solution
    profiles, while below are the kymographs.
    (Left) $\alpha = 1.5$, (Middle) $\alpha = 3.25$, (Right) $\alpha = 7.5$.
    }\label{Fig:WeaklyAdhesiveRepusliveSoln}
\end{figure}

\section{Conclusions}\label{sec:conclusion}

In the past due to
the challenges in construction of boundary conditions, the non-local adhesion
model was only considered on unbounded domains or with periodic boundary
conditions. However, correct adhesive-boundary interactions are important
in biological systems such as during zebrafish development.
Here we considered the formulation of no-flux boundary conditions for
the non-local adhesion model~\eqref{Eqn:ArmstrongModelIntro}, and established
the global existence and uniqueness of solutions of~\eqref{Armstrong1}.
Thus our work here significantly extends our methods of modelling cell adhesion.
We considered two possible methods of extending the non-local adhesion
operator (1) by treating the adhesion and diffusion flux as independent, and
(2) having the two fluxes depend on each other.

In the independent flux case, we
impose standard Neumann boundary conditions for the cell population $u(x,t)$,
while the behaviour of the non-local operator $\K$ near the boundary is
built into the operator itself. For these no-flux boundary conditions, we
establish the global existence of solutions, using standard methods from
semi-group theory. While the argument itself is standard, it relies on
the novel computation of the weak derivative of the non-local term and its
estimates.

The numerical solutions demonstrate that due to the no-flux boundary conditions
the translational symmetry observed in the periodic case is broken, and that
peaks form at precisely defined locations. This is significant in many
biological systems in which combinations of repellent boundaries together with
cell-cell adhesion are used to precisely position pre-cursor cells of
organs~\cite{Paksa2016}. Repulsive boundaries also accelerated the formation of
adhesive cell clusters away from the boundary.

Our existence theory is currently limited to
the situation in which the diffusive and adhesive flux are independently
zero on the domain's boundary. In particular, the adhesive / repulsive
boundary conditions from \cref{Example:WeaklyAdhesive} are not covered
by our theory except for one particular choice for $\beta$. It is therefore
highly desirable to extend the existence theory to include the cases of \cref{Example:WeaklyAdhesive}.
In this case, we must solve a non-local equation~\eqref{Eqn:ArmstrongModelIntro},
subject to non-local Robin boundary conditions (\ref{Robin}).
This is a challenging problem. A starting point may be the recent work by
\cite{Arendt2018}, who studied the semi-group originating from an
elliptic operator on a bounded domain with a linear non-local Robin type boundary
condition. As our Robin condition (\ref{Robin}) is non-linear,
the methods of \cite{Arendt2018} will not directly apply and non-linear methods need to be developed.

\section*{Acknowledgments}
AB gratefully acknowledges support from a NSERC post-doctoral fellowship.
TH gratefully acknowledges support from an NSERC discovery grant.

\bibliographystyle{siamplain}
\bibliography{references}
\end{document}